\newtheorem{thm}{Theorem}
\newtheorem{optimization}{Optimization Problem}
\newenvironment{manualtheorem}[1]{%
  \IfBlankTF{#1}
    {}
    {}%
  \manualtheoreminner
}{\endmanualtheoreminner}
\theoremstyle{definition}
\newtheorem{deff}[thm]{Definition}
\newtheorem{proposition}[thm]{Proposition}
\newtheorem{lemma}[thm]{Lemma}
\theoremstyle{remark}
\newtheorem{rmk}[thm]{Remark}
\newtheorem{corollary}[thm]{Corollary}
\newcommand{\R}{\mathbb{R}} 
\newcommand{\N}{\mathbb{N}}
\newcommand{\e}{\varepsilon}
\DeclareMathOperator*{\argmax}{arg\,max}
\DeclareMathOperator{\vol}{vol}
\DeclareMathOperator{\dom}{dom}
\DeclareMathOperator{\supp}{supp}
\DeclareMathOperator{\intt}{int}
\DeclareMathOperator{\tr}{tr}
\DeclareMathOperator{\divv}{div}
\newcommand{\Sp}{\mathbb{S}^{n-1}}
\begin{document}

	\author{Steven Hoehner and Michael Roysdon}
		\title[Maximal intersection position for logarithmically concave functions and measures]{On maximal intersection position for logarithmically concave functions and measures}
 \date{\today}

	\subjclass{Primary:  46N10, 52A39, 52A40, 60E05;  Secondary: 28A75} \keywords{convex body, convolution, isotropic, John position, log-concave,  maximal intersection position}	

\thanks{Michael Roysdon is supported by an AMS-Simons Travel Grant}

 \begin{abstract}
A  new  position is introduced and studied for the convolution of log-concave functions, which may be regarded as a functional analogue of the maximum intersection position of convex bodies introduced and studied by Artstein-Avidan and Katzin (2018) and Artstein-Avidan and Putterman (2022). Our main result is a John-type theorem for the maximal intersection position of a pair of log-concave functions, including the corresponding decomposition of the identity. The main result holds under very weak assumptions on the functions; in particular, the functions considered may both have unbounded supports.  As an application of our results, we introduce a John-type position for even $\log$-concave measures. 
\end{abstract}
	
	\maketitle


\section{Introduction} \label{s:intro}


We shall work in $n$-dimensional Euclidean space $\R^n$ ($n\geq 1$) equipped with the standard inner product and Euclidean norm $|\cdot|$. The origin in $\R^n$ is denoted $o$. The interior and boundary of a set $A\subset\R^n$ are denoted by $\intt(A)$ and $\partial A$, respectively.

A \emph{convex body} $K\subset\R^n$ is a convex, compact set with nonempty interior. The $n$-dimensional volume of $K$ is denoted $\vol_n(K)$. Every convex body $K$ in $\R^n$ induces a gauge function $\|\cdot\|_K$ defined by $\|x\|_K=\inf\{\lambda\geq 0:\, x\in\lambda K\}$ for $x\in \R^n$. For more background on convex bodies, we refer the reader to, e.g., the book of Schneider \cite{SchneiderBook}.

The maximum volume ellipsoid contained in a convex body $K$ in $\R^n$, called the \emph{John ellipsoid} of $K$, plays a fundamental role in convex geometry and asymptotic geometric analysis. When the John ellipsoid of $K$ is the Euclidean unit ball $B_2^n$, we say that $K$ is in \emph{John position}. The celebrated John's theorem \cite{John,Ball92,Ball97} asserts that a convex body $K$ in $\R^n$ that contains the Euclidean unit ball $B_2^n$ is in John position if and only if there exists a centered isotropic measure on $\Sp$ supported on the contact points of $K\cap B_2^n$. A finite Borel measure $\mu$ on the Euclidean unit sphere $\Sp$ is \emph{isotropic} if 
\begin{equation}\label{e:isotropicclasical}
\int_{\Sp}u\otimes u\,d\mu(u)=I_n,
\end{equation}
where $x\otimes y$ denotes the linear transformation on $\R^n$ defined by $(x\otimes y)(z)=\langle z,y\rangle x$ and $I_n$ is the $n\times n$ identity matrix. The condition \eqref{e:isotropicclasical} is equivalent to the condition that, for every unit vector $\theta$ in $\R^n$, 
\[
\int_{\mathbb{S}^{n-1}} \langle y,\theta \rangle^2\, d\mu(y) = \frac{\mu(\mathbb{S}^{n-1})}{n}. 
\]

\noindent A measure $\mu$ is called \emph{centered} if $\int_{\Sp}u\,d\mu(u)=o$. More precisely, John's theorem may be formulated as follows.

\begin{manualtheorem}{A}[John's Theorem \cite{John,Ball92,Ball97}]
Let $K$ be a convex body in $\R^n$ such that $B_2^n\subset K$. Then the following are equivalent:
\begin{itemize}
    \item[(i)] $B_2^n$ is the maximum volume ellipsoid contained in $K$.

    \item[(ii)]  There exist $c_1,\ldots,c_m>0$ and $u_1,\ldots,u_m\in\partial K\cap\Sp$, with $m\leq n(n+3)/2$, such that $I_n=\sum_{i=1}^m u_i\otimes u_i$ and $\sum_{i=1}^m c_i u_i=o$.
\end{itemize}
\end{manualtheorem}

There are numerous extensions and generalizations of John's theorem in different directions. In one direction, one replaces ellipsoids by \emph{positions}, or affine images,  of another convex body $L$. In another direction, one replaces convex bodies by logarithmically concave (``log-concave") functions on $\R^n$. In the present paper, we pursue both directions and investigate a new  position for arbitrary log-concave functions. The new position may be regarded as a functional analogue of the maximum intersection position of convex bodies (described below). Our main result is a John-type theorem for this new position, including the corresponding decomposition of the identity. To the best of our knowledge, our main result is the first John-type theorem for log-concave functions with \emph{unbounded supports}.

Given a pair of convex bodies $K,L\subset\R^n$, the \emph{maximal volume image} of $K$ inside $L$ is the affine image of $K$ contained in $L$ that has greatest volume. We then say that $K$ is in  \emph{maximal volume position} if $K$ itself is the maximal volume image of $K$ inside $L$. This position has been studied extensively;  see, for example, \cite{BR-2001,GPT,GM-2000,GP-1999,G-L-M-P,GS-2005} and the references therein. In particular, the relation between positions of convex bodies and isotropic measures was studied in  \cite{BR-2001,GPT,GM-2000,GP-1999}, while extensions to related minimization problems were investigated in \cite{BR-2001,BR-2004,G-L-M-P,Lasserre,LYZ-2005}.  For more background on John ellipsoids and positions of convex bodies, we refer the reader to the preceding references,  as well as the monographs of Artstein-Avidan, Giannopoulos and Milman \cite{AGA-Book} and Schneider \cite{SchneiderBook}.

Recently, Artstein-Avidan and Putterman \cite{AP-2022} introduced the maximal intersection position of a pair of convex bodies, which  generalizes the maximal volume position. Two convex bodies $K,L\subset\R^n$ are said to be in \emph{maximal intersection position} if $K$  itself has largest intersection with $L$ among all convex bodies with the same volume as $K$. In the special case when $K$ is a ball of radius $r$ and $L$ is symmetric, it reduces to the \emph{$r$-maximal intersection position} previously introduced and studied by Artstein-Avidan and Katzin in \cite{AK2018} (see also \cite{BH-2022}).

As shown in \cite{AK2018,AP-2022}, these maximal intersection positions also give rise to analogues of John's theorem, including the corresponding decompositions of the identity. In particular, Artstein-Avidan and Putterman \cite{AP-2022} proved the following result. For a convex body $L$ in $\R^n$, let  $\sigma_{\partial L}$ denote the $(n-1)$-dimensional Hausdorff measure restricted to its boundary $\partial L$.  For any $x\in\partial L$, let $n_L(x)$ be the unit normal at $x$, which is defined $\mathcal{H}^{n-1}$-almost everywhere on $\partial L$. 

\begin{manualtheorem}{B}(\cite[Theorem 1.4]{AP-2022})\label{AP-thm}
    Let $K,L\subset\R^n$ be convex bodies that are in maximal intersection position, and suppose  that $\vol_{n-1}(\partial K\cap \partial L)=0$. Then we have 
    \begin{align*}
        \int_{K\cap \partial L}n_L(x)\,d\sigma_{\partial L}(x)&=o,\\
        \int_{K\cap \partial L}x\otimes n_L(x)\,d\sigma_{\partial L}(x) &= C\cdot I_n
    \end{align*}
for some constant $C>0$. 
    The same formulae hold when interchanging the roles of $K$ and $L$.
\end{manualtheorem}

\noindent Please note that the assumption $\vol_{n-1}(\partial K \cap \partial L) = 0$ cannot be omitted from Theorem \ref{AP-thm} (see \cite{AP-2022}).

In the present paper, we introduce and study a  maximal intersection position for pairs of log-concave functions. Our main result, stated as  Theorem~\ref{t:isotropic} below, is an analogue of Theorem \ref{AP-thm} which holds for log-concave functions that satisfy certain conditions on their supports. These conditions are analogous to those imposed on the boundaries of the convex bodies in Theorem \ref{AP-thm}. Interestingly, in a departure from the geometric case of Theorem \ref{AP-thm}, the corresponding decomposition of the identity that we obtain now splits into two parts: a nonsingular term and a singular (boundary) term.  In fact, as our main result shows, the singular term arises only when the functions considered have bounded supports; furthermore, if the functions considered have unbounded supports, then only the nonsingular term arises. In particular, choosing characteristic functions of convex bodies in our main result allows one  to recover the corresponding results from the geometric case in \cite{AK2018,AP-2022}.

The maximal intersection position introduced in the present paper is akin to some of the other positions of log-concave functions  which have been recently investigated, including John ellipsoids  \cite{AGJV,IN-2022}, as well as a John position for pairs of log-concave functions in \cite{IN-2023} (see also \cite{IT-2021,Li-Schutt-Werner} and the references therein). We also note that the minimal perimeter of a log-concave function was studied in \cite{Fang-Zhang}, where a corresponding John-type theorem was also proved.

Before stating the new definition and  our main results, we first recall some preliminaries. A  function is \emph{$\log$-concave} if it is of the form $\exp(-\varphi)$ for some convex function $\varphi \colon \R^n \to (-\infty,\infty]$.  Our principal goal is to study the following problem: given a pair of integrable, sufficiently regular $\log$-concave functions $f$ and $g$, consider the quantity
\begin{equation}\label{e:generaloptimization}
m_{f,g} :=  
\sup\left\{ \int_{\R^n}f(x)g(T^{-1}(x-z))\,dx:\,(T,z) \in \text{SL}_n(\R) \times \R^n\right\}.
\end{equation} 


\noindent The main definition of this paper, the maximal intersection position for a pair of log-concave functions, reads as follows.

\begin{deff}\label{d: maxintpostradr} Suppose that $f,g \colon \R^n \to [0,\infty)$ are a pair of integrable $\log$-concave functions. If $(I_n,o)$ is a solution to \eqref{e:generaloptimization}, then we say that $f$ and $g$ are in \emph{maximal intersection position}. 
    
\end{deff}


We begin by stating two results which are special cases of our main result. Each of these two results highlights an interesting facet of Theorem \ref{t:isotropic}. First, we present a special case in which the decomposition of the identity is in a simplified form. This result will be obtained by simply choosing one of the functions in Theorem \ref{t:isotropic} to be supported on all of $\R^n$, so that the boundary of the support is empty and all boundary terms vanish. In what follows, let $\text{supp}(f)$ denote the support of $f$. 


\begin{thm}\label{t:fulldim} Let $f \colon \R \to [0,\infty)$ be an integrable $\log$-concave function, and let $g= e^{-\varphi}$ be a $\log$-concave function supported on $\R^n$ with $\int_{\R^n} |\nabla g|^2 < \infty$. 
\begin{itemize}
\item[(a)] There exists a pair $(T_0,z_0)\in{\rm SL}_n(\R)\times\R^n$ such that the supremum \eqref{e:generaloptimization} is attained.
\item[(b)] If $f$ and $g$ are in maximal intersection position, then for every $\theta \in \mathbb{S}^{n-1}$,
\[
C_{f,g} I_n = \int_{\R^n} \nabla \varphi(x) \otimes x  f(x) g(x)\, dx,
\]
and $o = \int_{{\rm supp}(f)}f(x)g(x) \varphi(x)\,dx$, where 
\[
C_{f,g} := \frac{1}{n} \int_{{\rm supp}(f)} \langle \nabla \varphi(x), x \rangle f(x)g(x)\, dx.
\]
\end{itemize}
\end{thm}

Another interesting special case of our main result is a John-type theorem for log-concave measures. Furthermore, from this result we will deduce Theorem \ref{AP-thm} as a special case. While John-type positions for $\log$-concave functions have been widely studied in the literature, the same is not true for $\log$-concave measures. A sufficiently regular Borel measure on $\R^n$ is \emph{$\log$-concave} if its density is a $\log$-concave function.   Given a $\log$-concave probability measure $\mu$ on $\R^n$ and nonempty convex sets $K, L \subset \R^n$ and $r >0$, consider
\begin{equation}\label{e:measureopt}
m_{\mu}(r) :=\sup\left\{\mu(K \cap (TL+z)): (T,z) \in {\rm GL}_n(\R) \times \R^n, \,|\det T| = r\right\}.
\end{equation}
This corresponds to choosing $f = 1_K e^{-\psi}$, with $\psi$ supported on the whole of $\R^n$, and $g = 1_L$ in \eqref{e:generaloptimization} when $r = 1$. This leads to the next definition. 

\begin{deff}\label{d:measures} Let $\mu$ be a $\log$-concave probability measure on $\R^n$, $K, L \subset \R^n$ be convex bodies, and let $r>0$.  Then we say that $K$ is in \emph{maximal $\mu$-intersection position of radius $r$ with respect to $L$} if $(I_n,o)$ is a solution to \eqref{e:measureopt}. 
\end{deff}

\begin{thm}\label{t:measure} Let $\mu$ be a $\log$-concave probability measure on $\R^n$, let $K,L \subset \R^n$ be convex bodies and let $r >0$. 

\begin{enumerate}
    \item[(a)] There exists a pair $(T_0,z_0)\in{\rm GL}_n(\R)\times\R^n$ such that the supremum \eqref{e:measureopt} is attained.
\item[(b)] Assume, additionally, that the following hold: 
\begin{itemize}
    \item $\vol_{n-1}(\partial K\cap \partial L) = 0$;
    \item $\vol_{n-1}(\partial K \cap \partial (TL+z)) = 0$ for all but finitely many $(T,z) \in  {\rm SL}_n(\R) \times \R^n$.
\end{itemize} 
If $K$ is in maximal $\mu$-intersection position of radius $r$ with respect to $L$, then for every $\theta \in \mathbb{S}^{n-1}$,
\begin{equation*}
 \begin{split}
 \int_{K \cap \partial L}\langle n_{L}(y),\theta\rangle \langle y, \theta \rangle\, d\mu_{\partial L}(y)
 =\frac{1}{n} \int_{K \cap \partial L}\langle n_{L}(y),y \rangle\, d\mu_{\partial L}(x) .
\end{split}  
\end{equation*}
In particular, the following decomposition of the identity holds:  
\[
\frac{I_n}{n}=\frac{\int_{K \cap \partial L} n_{L}(y) \otimes y\, d\mu_{\partial L}(y) }{\int_{K\cap \partial L}\langle n_{L}(y),y \rangle\, d\mu_{\partial L}(y)}.
\]
Moreover, 
\[
o=  \int_{K \cap \partial L}n_{L}(y) \,d\mu_{\partial L}(y).   
\]
\end{enumerate}
\end{thm}

We  recover Theorem~\ref{AP-thm} by choosing $\mu$ to be the Lebesgue measure on $\R^n$, with the additional requirement specified at the beginning of part (b) of Theorem~\ref{t:measure}.



Both Theorem~\ref{t:fulldim} and Theorem~\ref{t:measure}  follow from  the more general results given in  Theorem \ref{t:exitence} and Theorem \ref{t:isotropic} below. 

 In what follows, we refer to an ellipsoid $\mathcal{E}$ as a nondegenerate linear image of $B_2^n$, i.e., $\mathcal{E} = TB_2^n$ for some $T \in {\rm GL}_n(\R)$.  Let $\mu$ be an even $\log$-concave measure with strictly positive density and $K \subset \R^n$ an origin-symmetric convex body. A standard compactness argument shows that there is an ellipsoid $\mathcal{E}_0 \subset K$ such that 
\[
\mu(\mathcal{E}_0) = \sup\{\mu(\mathcal{E}) \colon \mathcal{E} \subset K,\, \mathcal{E} \text{ is an ellipsoid\}}. 
\]
We call $\mathcal{E}_0$ a \emph{$\mu$-John ellipsoid} of $K$. Finally, we say that $K$ is in the \emph{$\mu$-John position} if $B_2^n$ is a $\mu$-John ellipsoid of $K$. As an application of our results, we establish a John-type position for $\log$-concave measures.

\begin{thm}\label{t:measureJohn} Let $\mu$ be an even $\log$-concave measure whose density is strictly positive, and let $K \subset \R^n$ be an origin-symmetric convex body that is in $\mu$-John position. Then there exists a Borel measure $\nu$ on $\mathbb{S}^{n-1}$, supported in $\partial K \cap \mathbb{S}^{n-1}$, such that 
\[
\frac{I_n}{n} = \int_{\mathbb{S}^{n-1}\cap \partial K} x \otimes x\, d\nu(x). 
\]
\end{thm}


    


Our main results are formally presented  in Section \ref{Main-results}, and their proofs  are  given in Sections \ref{s:existence} and  \ref{s:variationalformulas}.

\section{Maximal intersection position for pairs of log-concave functions}\label{Main-results}

Before stating our main results, let us briefly discuss the notation that will be used throughout the paper. For a convex function $\psi:\R^n\to\R\cup\{\infty\}$, let $\dom(\psi)=\{x\in\R^n:\,\psi(x)<\infty\}$. We say that $\psi$ is \emph{proper} if $\dom(\psi)$ is nonempty, and $\psi$ is \emph{coercive} if $\lim_{|x|\to\infty}\psi(x)=\infty$. The set of points on or above the graph of $\psi$ is a convex set in $\R^{n+1}$ called the \emph{epigraph} of $\psi$. If the epigraph of $\psi$ is closed, then $\psi$ is called \emph{lower semicontinuous}. Let
 \[
\mathcal{C}^n:=\{\psi:\R^n\to\R\cup\{\infty\}:\,\psi\text{ is convex, proper, coercive and lower semicontinuous}\}.
 \]

A canonical way of embedding the class of convex bodies in $\R^n$ (equipped with Minkowski addition) into a functional setting is by considering log-concave functions. A function $f:\R^n\to[0,\infty)$ is \emph{log-concave} if it has the form $f=e^{-\psi}$ for some convex function $\psi:\R^n\to\R\cup\{\infty\}$. Here we use the convention $e^{-\infty}=0$.  Our main results will be stated for the following class of coercive log-concave functions on $\R^n$:
\[
\mathcal{L}^n := \left\{f = e^{-\psi}\colon\, \psi \in \mathcal{C}^n\right\}.
\]

\noindent The \emph{support} of a measurable function $h:\R^n\to\R\cup\{\pm\infty\}$ is the set $\supp(h)= \overline{\{x \in \R^n \colon h(x) \neq 0\}}$. To simplify the notation, throughout the paper we will write  $K_h:=\supp(h)$.


Let $K$ be a convex body in $\R^n$. The \emph{characteristic function} $\mathbbm{1}_K:\R^n\to[0,\infty)$ is defined by $\mathbbm{1}_K(x)=1$ if $x\in K$, and $\mathbbm{1}_K(x)=0$ if $x\not\in K$. Note that $\mathbbm{1}_K\in\mathcal{L}^n$ and $\supp(\mathbbm{1}_K)=K$.

As a special case of Rademacher's theorem (see, e.g., \cite[Theorem 10.8(ii)]{Villani-book}), every convex function $\psi:\R^n\to\R$ is differentiable almost everywhere on the interior of its domain (see also \cite[Theorem 25.5]{RockafellarBook}). Hence, every log-concave function $f:\R^n\to\R$ is differentiable almost everywhere on the interior of its support. 

It is well-known that if $f\in\mathcal{L}^n$, then $f$ is integrable (see, for example, \cite[p. 3840]{cordero-erasquin-klartag}). Thus,  for $f = e^{-\psi} \in \mathcal{L}^n$, we may consider the \emph{total mass} functional 
\[
J(f) = \int_{\R^n}f(x)\, dx = \int_{\R^n} e^{-\psi(x)}\,dx. 
\]
 In particular, $J(\mathbbm{1}_K)=\vol_n(K)$.

For more background on convex  analysis, we refer the reader to, e.g., \cite{RockafellarBook, Rockafellar-Wets}, and for more background on log-concave functions, we refer the reader to, e.g., \cite{Colesanti-inbook}.

\subsection{Existence of the maximal intersection position}

Given a pair of functions $f,g \in \mathcal{L}^n$, consider the functional $P_{f,g}\colon M_n(\R)\times\R^n \to \R$ defined by 
\[
P_{f,g}(T,z) := \int_{\R^n}f(x) g(T^{-1}(x-z))\, dx. 
\]

\noindent For this functional, we consider the following optimization problem.
	
\begin{optimization}\label{q:optimization}
For any fixed pair $f,g \in \mathcal{L}^n$, determine if
\begin{equation}\label{e:opt1}
\sup\{P_{f,g}(T,z) \colon (T,z) \in {\rm SL}_n(\R) \times \R^n\}
\end{equation}
has an optimizer.
\end{optimization}

Our first result shows that an optimizer of Problem~\ref{q:optimization} indeed exists. 

\begin{thm}\label{t:exitence}
For every fixed pair of functions $f,g \in \mathcal{L}^n$,  there exists a pair $(T_0,z_0) \in {\rm SL}_n(\R) \times \R^n$ solving the optimization problem \eqref{e:opt1}.
\end{thm}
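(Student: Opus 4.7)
The plan is to apply the direct method of the calculus of variations. Write $g=e^{-\varphi}$ with $\varphi\in\mathcal{C}^n$; since $\varphi$ is lower semicontinuous and coercive, it attains its minimum, so $\|g\|_\infty<\infty$ and $P_{f,g}(T,b)\le\|g\|_\infty J(f)$, whence $M:=\sup P_{f,g}<\infty$. On the other hand, choosing $T=I_n$ and $b=y_0-x_0$ for interior points $x_0\in\intt(K_f)$, $y_0\in\intt(K_g)$ makes the integrand strictly positive on a nonempty open set, so $M>0$. Fix a maximizing sequence $(T_k,b_k)\in{\rm SL}_n(\R)\times\R^n$.

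The key technical device is a truncation estimate. For any $c\in(0,\|g\|_\infty)$ one has $g\le c+\|g\|_\infty\mathbbm{1}_{\Phi_c}$, where $\Phi_c:=\{g\ge c\}$ is a convex body (bounded by coercivity of $\varphi$, convex by log-concavity, with nonempty interior for $c$ sufficiently small). This yields
$$
P_{f,g}(T,b)\;\le\; c\,J(f)+\|g\|_\infty\int_{T(\Phi_c+b)}f(x)\,dx.
$$
Fix $c>0$ so small that $c\,J(f)<M/2$. Then along the maximizing sequence, the integral on the right is bounded below by $M/(2\|g\|_\infty)-o(1)$.

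The main obstacle is to preclude escape of $(T_k,b_k)$ from compact subsets of ${\rm SL}_n(\R)\times\R^n$. If $\|T_k\|\to\infty$, then $\det T_k=1$ forces the smallest singular value $\sigma_n(T_k)\to 0$, and via the SVD the set $T_k(\Phi_c+b_k)$ lies in a slab of width $O(\sigma_n(T_k)\cdot\mathrm{diam}(\Phi_c))$ perpendicular to some unit direction $u_k$. Given $\epsilon>0$, pick a compact $K_\epsilon\subset\R^n$ with $\int_{\R^n\setminus K_\epsilon}f<\epsilon$ (using $f\in L^1$); then
$$
\int_{T_k(\Phi_c+b_k)}f\;\le\; \|f\|_\infty\,|K_\epsilon\cap\text{slab}_k|+\epsilon,
$$
and the first term tends to $0$ as $k\to\infty$ because $K_\epsilon$ is bounded and the slab width vanishes. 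Sending $\epsilon\to 0$ contradicts the lower bound above, so $\|T_k\|$ is bounded; together with $\det T_k=1$ this confines every singular value of $T_k$ to a compact subinterval of $(0,\infty)$. A parallel argument handles translations: if $|b_k|\to\infty$ then uniform invertibility of $T_k$ yields $|T_k b_k|\to\infty$, so the bounded set $T_k(\Phi_c+b_k)$ escapes to infinity in $\R^n$ and its $f$-integral tends to zero by integrability of $f$, again a contradiction.

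Extracting a subsequence, $(T_k,b_k)\to(T_0,b_0)\in{\rm SL}_n(\R)\times\R^n$. Since $g$ is upper semicontinuous and lies in $C^1(\intt(K_g))$, its discontinuity set is contained in $\partial K_g$, which has Lebesgue measure zero; consequently $g(T_k^{-1}x-b_k)\to g(T_0^{-1}x-b_0)$ for almost every $x\in\R^n$, and the integrands are dominated by the $L^1$ function $\|g\|_\infty f$. The dominated convergence theorem then yields $P_{f,g}(T_k,b_k)\to P_{f,g}(T_0,b_0)=M$, so $(T_0,b_0)$ solves \eqref{e:opt1}.
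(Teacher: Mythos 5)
Your proof is correct, and it takes a genuinely different route from the paper. The paper invokes the linear lower bounds $-\log f(x)\ge a|x|+b$, $-\log g(x)\ge c|x|+d$ available for coercive convex $\psi,\varphi$ (this is their \eqref{e:linearconvex}), reduces the problem via SVD to a product of one-dimensional integrals $\int_\R e^{-(|t|+|\lambda t+s|)}\,dt$, and then computes those integrals in closed form to exhibit decay as the largest singular value diverges. You instead bypass all explicit computation with a tightness argument: the soft bound $g\le c+\|g\|_\infty\mathbbm{1}_{\Phi_c}$ for the compact convex super-level set $\Phi_c=\{g\ge c\}$ isolates a genuinely compact set, and if the largest singular value of $T_k$ blows up then $\det T_k=1$ forces $\sigma_n(T_k)\to 0$ (indeed $1=\prod_i\sigma_i\ge\sigma_1\sigma_n^{n-1}$), so $T_k(\Phi_c+b_k)$ collapses into a slab of vanishing width and its $f$-mass dies by a standard exhaustion of $\R^n$ by compacta. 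The translation escape and the final passage to the limit by dominated convergence (discontinuities of $g$ live on $\partial K_g$, a null set, and $\|g\|_\infty f$ dominates) are both handled correctly. Your approach is more robust---it never needs the sharp exponential form of the tail estimate and would survive weaker integrability hypotheses---at the cost of being slightly less quantitative than the paper's, which in principle yields explicit bounds on where the optimizer lives. One cosmetic slip: with the paper's convention $P_{f,g}(T,b)=\int f(x)\,g(T^{-1}x-b)\,dx$, the shift making the integrand positive near $x_0\in\intt(K_f)$ should be $b=x_0-y_0$ rather than $b=y_0-x_0$.
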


\begin{rmk} It is worth noting that the above theorem holds for nonnegative compactly supported, continuous functions and for nonnegative, continuous, integrable functions that are bounded with sufficient decay at infinity.
\end{rmk}


The main definition of this paper is the following maximal intersection position for log-concave functions.

\begin{deff}\label{mainDef}
We say that two functions $f,g \in \mathcal{L}^n$ are in \emph{maximal intersection position} if the pair $(I_n,o) \in {\rm SL}_n(\R) \times \R^n$ is a solution to the optimization problem \eqref{e:opt1}; that is, 
\[
P_{f,g}(T,z) \leq P_{f,g}(I_n,o) 
\]
for all $(T,z) \in {\rm SL}_n(\R) \times \R^n$. 
\end{deff}

\begin{rmk}
    Let $K$ and $L$ be convex bodies in $\R^n$. Choosing characteristic functions $f=\mathbbm{1}_K$ and $g=\mathbbm{1}_L$ in Definition \ref{mainDef}, it follows that for any $(T,z)\in M_n(\R)\times\R^n$, we have $P_{\mathbbm{1}_K,\mathbbm{1}_L}(T,b)=\vol_n(K\cap (TL+z))$. Thus, Problem \eqref{e:opt1} and Definition \ref{mainDef} reduce to the maximal intersection position of convex bodies studied in \cite{AK2018,AP-2022,BH-2022}.
\end{rmk}

\subsection{John-type theorem for the maximal intersection position}

Before stating our main result, we require one more technical definition: 

\begin{deff} We say that a pair $f,g \in \mathcal{L}^n$ satisfy the \emph{support regularity property} if the following hold: 
\begin{itemize}
    \item $\vol_{n-1}(\partial K_f \cap \partial K_g) = 0$;
    \item $\vol_{n-1}(\partial K_f \cap \partial (TK_g+z)) = 0$ for all but finitely many $(T,z) \in  {\rm SL}_n(\R) \times \R^n$;
    \item $\nabla g$ has bounded second moment, that is, $\int_{K_g}|\nabla g(x)|^2\,dx<\infty$.
\end{itemize}
\end{deff}

Our main result is the following isotropicity theorem for the maximal intersection position of log-concave functions. 

\begin{thm}\label{t:isotropic}
Suppose that $f,g = e^{-\varphi}\in\mathcal{L}^n$ satisfy the support regularity property. If $f$ and $g$ are in maximal intersection position, then the following statements hold and are equivalent: 
\begin{itemize}
    \item[(i)] For any matrix $T \in M_n(\R)$, we have
\begin{equation}\label{e:1stisotopriccondition}
\begin{split}
&\int_{K_f \cap \intt(K_g)}f(x)g(x)\langle \nabla \varphi(x),Tx \rangle\, dx + \int_{K_f \cap \partial K_g}f(x)g(x)\langle n_{K_g}(x),Tx \rangle\, d\sigma_{\partial K_g}(x)\\
 &=\frac{\tr(T)}{n}\left[\int_{K_f \cap \intt(K_g)} f(x)g(x) \langle \nabla \varphi(x), x\rangle\, dx + \int_{K_f \cap \partial K_g}f(x)g(x)\langle n_{K_g}(x),x \rangle\, d\sigma_{\partial K_g}(x) \right].
\end{split}
\end{equation}

\item[(ii)] For every $\theta \in \mathbb{S}^{n-1}$, we have  
\begin{equation}\label{e:3rdisotropiccondition}
 \begin{split}
&\int_{K_f \cap \intt(K_g)}f(x)g(x)\langle \nabla \varphi(x),\theta \rangle \langle x, \theta \rangle\, dx + \int_{K_f \cap \partial K_g}f(x)g(x)\langle n_{K_g}(x),\theta\rangle \langle x, \theta \rangle\, d\sigma_{\partial K_g}(x)\\
 &=\frac{1}{n}\left[\int_{K_f \cap \intt(K_g)} f(x)g(x) \langle \nabla \varphi(x), x\rangle\, dx + \int_{K_f \cap \partial K_g}f(x)g(x)\langle n_{K_g}(x),x \rangle\, d\sigma_{\partial K_g}(x) \right],
\end{split}   
\end{equation}
and for every $\theta, \omega \in \mathbb{S}^{n-1}$ with $\langle \theta,\omega \rangle = 0$, 
\[
\int_{K_f \cap \intt(K_g)}f(x)g(x) \langle \nabla \varphi(x),\theta \rangle \langle x,\omega \rangle\, dx = - \int_{K_f \cap \partial K_g}f(x)g(x) \langle n_{K_g}(x), \theta \rangle \langle x,\omega \rangle \,d\sigma_{\partial K_g}(x).
\]
\end{itemize}

Moreover, 
\begin{equation}\label{e:centering}
o= \int_{K_f \cap \intt(K_g)}f(x)g(x) \nabla \varphi(x)\, dx+ \int_{K_f \cap \partial K_g}f(x)g(x)n_{K_g}(x)\, d\sigma_{\partial K_g}(x).
\end{equation}
\end{thm}

Observe that there is no requirement that the log-concave functions in Theorem \ref{t:isotropic} have compact supports. This is a key feature of Theorem \ref{t:isotropic} that distinguishes it from related results in the literature, in particular from those in the geometric setting of \cite{AK2018,AP-2022}, and those in the functional setting of \cite{IN-2023},   where it was assumed that at least one of the functions has bounded support.

The proof of Theorem \ref{t:isotropic} will be divided into two steps. First, we prove the result for log-concave functions with compact supports by deriving a formula for the first variation of the maximal intersection position. The arguments in this step are inspired by those in the geometric settings of \cite{AK2018,AP-2022}. In the second step, we remove the compact support restriction via an approximation argument. The proof of Theorem~\ref{t:isotropic} also relies on two variational formulas, stated in Theorems~\ref{t:variationalformula} and \ref{t:shifts} below.  

\begin{rmk}
    The integral expressions appearing in Theorem \ref{t:isotropic} are reminiscent of those in Rotem's  formula \cite[Theorem 1.5]{Rotem-2023} for the first variation $\delta(f,g)=\lim_{t\to 0^+}\frac{\int f\oplus(t\times g)-\int f}{t}$ of a log-concave function $g\in\mathcal{L}^n$ in the ``direction" $f\in\mathcal{L}^n$ (see also \cite{Colesanti-Fragala-2013}), where, for $x\in\R^n$ and $t>0$,  $[f\oplus(t\times g)](x)=\sup_{x = y +  tz} f(y)g(z)^t$ is the Asplund sum (sup-convolution) of $f$ and $g$. There it was shown that \[\delta(f,g)=\int_{\R^n}h_g\,d\mu_f+\int_{\Sp}h_{K_g}\,d\nu_f,\] where $\mu_f$ and $\nu_f$ are certain measures on $\R^n$ and $\Sp$, respectively, that depend on $f$, and $h_g$ is the support function of $g$. This formula arose naturally as a consequence of the co-area formula applied to functions of bounded variation and integration by parts. Integration by parts is also used in our proof of Theorem \ref{t:isotropic}, leading to the singular and nonsingular terms appearing there.  For more background and other closely related formulas for first variations of log-concave functions, we refer the reader to, e.g., \cite{Artstein-Rubinstein,Colesanti-Fragala-2013,Fang-Zhang,HLXZ-JDG-2023,Rotem1,Rotem-2023,Ulivelli-2023} and the references therein. 
\end{rmk}

\begin{rmk}
    Many log-concave functions satisfy the second moment condition in Theorem \ref{t:isotropic}, including gaussians and characteristic functions of convex bodies. It is unclear to us if this assumption is  necessary, or if it is simply an  artifact of our proof method.  We leave this question as a topic for future research.
\end{rmk}

 The following corollary is immediate from \eqref{e:3rdisotropiccondition} and \eqref{e:centering}. 

\begin{corollary} 
Suppose that $f, g = e^{-\varphi}\in \mathcal{L}^n$ satisfy  the hypotheses of Theorem~\ref{t:isotropic}. If $f$ and $g$ are in maximal intersection position, then the following decomposition of the identity holds:  
\[
C_{f,g} I_n = \int_{K_f \cap \intt(K_g)}\nabla \varphi \otimes x f(x)g(x) \, dx + \int_{K_f \cap \partial K_g} n_{K_g}(x) \otimes x f(x)g(x)\, d\sigma_{\partial K_g}(x) ,
\]
where
\[
C_{f,g}:= \frac{1}{n} \left(\int_{K_f \cap \intt(K_g)}  \langle \nabla \varphi(x), x\rangle f(x)g(x) \,dx + \int_{ K_f \cap \partial K_g} \langle n_{K_f}(x),x \rangle f(x)g(x)\, d\sigma_{\partial K_f}(x)\right),
\]
provided $C_{f,g} \neq 0$. 
Moreover, 
\[
o= \int_{K_f \cap \intt(K_g)} \nabla \varphi(x) f(x)g(x)\, dx+ \int_{K_f \cap \partial K_g}n_{K_g}(x) 
 f(x)g(x)\,d\sigma_{\partial K_g}(x).   
\]
\end{corollary}

\begin{proof} 
Observe that the condition \eqref{e:3rdisotropiccondition} of Theorem~\ref{t:isotropic} can be rephrased in the following way: for every $\theta \in \mathbb{S}^{n-1}$, 
\[
\int_{K_f \cap \intt(K_g)} \theta^t\nabla \varphi(x) x^t\theta f(x)g(x)\,dx + \int_{\partial K_f \cap K_g} \theta^tn_K(x) x^t\theta \,d\sigma_{\partial K}(x) = C_{f,g}. 
\]
In other words, for every $\theta \in \mathbb{S}^{n-1}$, we must have 
\[
\theta^t \left(\frac{1}{C_{f,g}} \left[\int_{K_f \cap \intt(K_g)}f(x)g(x) \nabla \varphi \otimes x \,dx + \int_{\partial K_f \cap K_g}f(x)g(x) n_{K_f}(x) \otimes x \,d\sigma_{\partial K_f}(x)  \right]\right) \theta = 1.
\]
Consequently, 
\[
I_n = \frac{1}{C_{f,g}} \left[\int_{K_f \cap \intt(K_g)}f(x)g(x) \nabla \varphi(x) \otimes x  \,dx + \int_{\partial K_f \cap K_g}f(x)g(x) n_{K_f}(x) \otimes x \,d\sigma_{\partial K_f}(x)  \right],
\]
as required. 
\end{proof}

\begin{rmk}
    Let $K$ and $L$ be convex bodies in $\R^n$ which satisfy $\vol_{n-1}(\partial K\cap\partial L)=0$ and $\vol_{n-1}(\partial K\cap\partial (TL+b))$ for all but finitely many $(T,b)\in{\rm SL}_n(\R)\times\R^n$. For such $K$ and $L$, choose $f=\mathbbm{1}_K$ and $g=\mathbbm{1}_L$. Then $f$ and $g$  satisfy the hypotheses of Theorem \ref{t:isotropic}, so under these restrictions  on the bodies $K$ and $L$, we recover  \cite[Theorem 1.4]{AP-2022} (stated as Theorem \ref{AP-thm} above); see also  \cite[Theorem 1.3]{AK2018} and \cite[Theorem 6.6]{AP-2022}.
\end{rmk}
\section{Proof of Theorem \ref{t:exitence}}\label{s:existence}

Next, we demonstrate that the problem \eqref{e:opt1} is sensible by showing that it always admits a solution.

\begin{proof}[Proof of Theorem~\ref{t:exitence}] Let $f,g \in \mathcal{L}^n$, and fix $(T,b) \in {\rm SL}_n(\R) \times \R^n$. Under the present assumptions on $f$ and $g$, there exist constants $a,c>0$ and $b,d \in \R$ such that 
\begin{equation}\label{e:linearconvex}
-\log(f(x)) \geq c|x| +d \qquad\text{and}\qquad -\log(g(x)) \geq a|x|+b
\end{equation}
for all $x \in \R^n$ (see, for example, \cite[Theorem 3.2.6]{Rockafellar-Wets}). Therefore, after a change of variables, we get
\begin{equation}\label{e:one}
\begin{split}
P_{f,g}(T,b) &\leq e^{-(b+d)} \int_{\R^n} e^{-(a|x| + c|Tx+z|)}\,dx \\
&\leq C(a,b,c,d) \int_{\R^n} e^{-(|x|+|Tx+z|)}\,dx,
\end{split}
\end{equation}
where $C(a,b,c,d) >0$ is some constant. 

Consider the singular value decomposition $T=UDV$ for some $n$-dimensional orthogonal matrices $U$ and $V$ and diagonal matrix $D = {\rm diag}(\lambda_1,\dots,\lambda_n)$, where $\lambda_i \geq \lambda_{i+1} > 0$, $i=1,\dots,n-1$, are the eigenvalues of $T$. Since the integral is invariant under orthogonal transformations, using the change of variables $y=Vx$ we obtain 
\begin{equation}\label{e:two}
\begin{split}
&\int_{\R^n}e^{-(|x| + |Tx+z|)}\, dx = \int_{\R^n} e^{-(|x| + |UDVx + z|)}\,dx = \int_{\R^n}e^{-(|y| + |Dy + U^{-1} z|)}\, dy\\
&\leq \int_{\R^n} \exp \left\{- \frac{1}{\sqrt{n}} \sum_{i=1}^n (|y_i| + |\lambda_iy_i + z_i'|) \right\} dy = C(n) \prod_{i=1}^n \int_{\R} e^{-(|y_i| + |\lambda_i y_i + z_i'|)}\,dy_i,
\end{split}
\end{equation}
where $z_i' = (U^{-1}z)_i$ and $C(n) >0$ is some constant. Combining \eqref{e:one} and \eqref{e:two}, we have shown that
\[
P_{f,g}(T,b) \leq C(a,b,c,d,n) \prod_{i=1}^n \int_{\R} e^{-(|y_i| + |\lambda_i y_i + z_i'|)}\,dy_i.
\]

For $\lambda >0$ and $s \in \R$, consider the integral 
\[
I_{\lambda,s} := \int_{\R} e^{-(|t|+|\lambda t + s| )} \,dt = \int_{\R} e^{-(|t| + |\lambda t + |s||)}\,dt. 
\]
To continue our estimate, we will compute $I_{\lambda,s}$ explicitly. We begin by writing 
\begin{align*}
I_{\lambda,s} &= \int_{\R} e^{-(|t| + |\lambda t + |s||)}\,dx\\
&= \underbrace{\int_{-\infty}^{-|s|/\lambda} e^{(1+\lambda)t + |s|}\,dt}_{=:I_1} + \underbrace{\int_{-|s|/\lambda}^{0} e^{(1-\lambda)t - |s|}\,dt}_{=:I_2} + \underbrace{\int_{0}^{\infty} e^{-(1+\lambda)t - |s|}\,dt}_{=:I_3}.
\end{align*}

\noindent It can be readily checked that 
\[
I_1 = \frac{e^{-|s|/\lambda}}{1+\lambda} \quad \text{ and } \quad I_3 = \frac{e^{-|s|}}{1+\lambda}.
\]
For $I_2$, we have 
\[
I_2 = e^{-|s|}\int_{-|s|/\lambda}^{0} e^{(1-\lambda)t} \,dt=\begin{cases}
    |s|e^{-|s|}, &\text{if }\lambda=1;\\
    \frac{e^{-|s|}-e^{-|s|/\lambda}}{1-\lambda}, &\text{if }\lambda\neq 1.
\end{cases}
\]
 
\noindent Hence, 
\[
I_{\lambda,s} = \begin{cases}
    (|s|+1)e^{-|s|}, &\text{if }\lambda=1;\\
    \frac{2\left(e^{-|s|}-\lambda e^{-|s|/\lambda}\right)}{1-\lambda^2}, &\text{if }\lambda\neq 1.
\end{cases}
\]

 Putting everything together, we have shown that for an arbitrary pair $(T,z) \in {\rm SL}_n(\R) \times \R^n$, after writing the singular value decomposition $T = UDV$  of $T$  as above, we obtain the estimate 
\begin{align*}
P_{f,g}(T,z) 
&\leq C(a,b,c,d,n)\prod_{i=1}^n I_{\lambda_i,b_i'}\\
&=C(a,b,c,d,n)\prod_{\lambda_i=1}I_{1,z_i'}\prod_{\lambda_i\neq 1}I_{\lambda_i,z_i'}.
\end{align*}
The function $h(s)=(|s|+1)e^{-|s|}$ has global maximum $h(0)=1$, and for every fixed $\lambda>0$, the function $h_\lambda(s)=\frac{2\left(e^{-|s|}-\lambda e^{-|s|/\lambda}\right)}{1-\lambda^2}$ has global maximum $h_\lambda(0)=\frac{2}{1+\lambda}$.  Therefore, $\prod_{\lambda_i=1}I_{1,b_i'}\leq 1$ and hence
\begin{equation}\label{upper-est1}
P_{f,g}(T,z)\leq C(a,b,c,d,n)\prod_{\lambda_i\neq 1}I_{\lambda_i,z_i'}
\leq C(a,b,c,d,n)\prod_{\lambda_i\neq 1}\frac{2}{1+\lambda_i}.
\end{equation}

Consider any sequence $\{(T_k,z_k)\}_k \subset {\rm SL}_n(\R) \times \R^n$, where  $\lambda_i(k)$ denotes the $i$th eigenvalue of $T_k$ (after its singular value decomposition), with $\lambda_i(k) \geq \lambda_{i+1}(k) > 0$, for all $i=1,\dots,n-1$.  Assume that this sequence satisfies 
\[
P_{f,g}(T_k,z_k)  \to \sup\{P_{f,g}(T,z) \colon (T,z) \in {\rm SL}_n(\R) \times \R^n \} =: m_{f,g}>0
\]
as $k \to \infty$. Then the maximal entry of $D_k={\rm diag}(\lambda_1(k),\ldots,\lambda_n(k))$ tends to infinity since the orthogonal group $O_n$ and the set of diagonal matrices with entries bounded by $c$ are compact sets. 
By \eqref{upper-est1}, we then have that 
\[
\lim_{k \to \infty}P_{f,g}(T_k,z_k) \leq C(a,b,c,d,n) \lim_{k\to \infty} \prod_{\lambda_i(k)\neq 1} \frac{2}{1+\lambda_i(k)} = 0,
\]
a contradiction.  Thus, there exists a constant $M >0$ such that $|\lambda_i(k)| \leq M$ for all $i,k$, and hence the matrices $T_k$ all belong to some compact set.  Using this fact, together with a similar argument to the one given above, from \eqref{e:one} it follows that the $|z_k|$ are uniformly bounded for all $k$. Consequently, there exists a subsequence $\{(T_{k_l},z_{k_l})\}_{l}$ of the original sequence converging to some $(T_0,z_0) \in {\rm SL}_n(\R) \times \R^n$. Thus, by the continuity of the map $T\mapsto \int_{\R^n}f(x)g(T^{-1}x)\,dx$ on ${\rm SL}_n(\R^n)$, we obtain 
\[
\lim_{l \to \infty} P_{f,g}(T_{k_l},z_{k_l}) = P_{f,g}(T_0,z_0) = m_{f,g},
\]
as desired.  This shows that the optimization problem \eqref{e:opt1} indeed admits a solution. 
\end{proof}

\begin{rmk} We note that it is only necessary to have that $g \in \mathcal{L}^n$ in the above proof, since if $f$ is any $\log$-concave function, then $\|f\|_{\infty} < \infty$, and the above computation follows through in this case. The roles of $f$ and $g$ may be reversed. 
\end{rmk}
\section{Variational formulas and the proof of Theorem~\ref{t:isotropic}} \label{s:variationalformulas}

The key ingredients in the proof of Theorem~\ref{t:isotropic} are the following variational formulas. In the case of compactly supported functions, the proofs of these results will combine ideas of Artstein-Avidan and Katzin \cite{AK2018} and Artstein-Avidan and Putterman \cite{AP-2022} (see also \cite{Fang-Zhang}). 

\begin{thm}\label{t:variationalformula}
Let $f,g\in\mathcal{L}^n$ satisfy the support regularity property. For $T \in M_n(\R)$ and $\e>0$, set  $T_\e := \frac{I_n + \e T}{\det(I_n +\e T)^{1/n}}$ and consider the function $F \colon [0,\infty) \to \R$ defined by 
\[
F(\e) =F_{f,g,T}(\e):= \int_{\R^n}f(x)g(T_\e^{-1}x)\, dx.
\] 
Then 
\begin{align*}
\frac{dF(\e)}{d\e} &= \int_{\intt(T_\e^{-1}K_f) \cap K_g} f(T_\e x) \langle \nabla g(x) ,A_\e x\rangle\, dx\\
&-\int_{(T_\e^{-1}K_f) \cap \partial K_g} f(T_\e x) g(x) \langle n_{K_g}(x), A_\e x\rangle\, d\sigma_{\partial K_g}(x)
    \end{align*}
whenever $0 \leq \e \leq \e_0$, where $A_\e := \left(\frac{\tr(T)}{n}I_n-T\right)T_\e +O(\e)T_\e$. In particular,
\begin{align*}
\frac{dF(\varepsilon)}{d\varepsilon}\bigg|_{\varepsilon=0}&=\int_{K_f \cap \intt(K_g)} f(x) \left\langle \nabla g(x), \left(T-\frac{\tr(T)}{n}I_n\right)x \right\rangle dx\\
&-\int_{K_f \cap \partial K_g}g(x)f(x)\left\langle n_{K_g}(x),\left(\frac{\tr(T)}{n}I_n-T\right)x\right\rangle d\sigma_{\partial K_g}(x).
    \end{align*}
\end{thm}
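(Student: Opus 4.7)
The plan is to apply Reynolds' transport theorem to the moving domain $D(\e):=K_f\cap T_\e K_g$ and then pull everything back to the fixed set $K_g$ via the volume-preserving change of variables $y=T_\e^{-1}x$.  Writing $\dot T_\e := dT_\e/d\e$ and $B_\e := T_\e^{-1}\dot T_\e$, differentiating the identity $\det T_\e\equiv 1$ yields $\tr(B_\e)=0$, while $-B_\e = (dT_\e^{-1}/d\e)\,T_\e$ together with the expansion $T_\e = I_n + \e(T-\tfrac{\tr T}{n}I_n) + O(\e^2)$ gives
\[
-B_\e \;=\; \Bigl(\tfrac{\tr T}{n}I_n - T\Bigr)T_\e + O(\e)T_\e,
\]
so we may take $A_\e := -B_\e$.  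Only the piece $K_f\cap T_\e\partial K_g$ of $\partial D(\e)$ is $\e$-dependent; the stationary piece $\partial K_f\cap T_\e K_g$ contributes nothing to $F'(\e)$, while the codimension-$2$ set $\partial K_f\cap T_\e\partial K_g$ is discarded by the hypothesis $\vol_{n-1}(\partial K_f\cap\partial(T_\e K_g))=0$.

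For the bulk contribution, the integrand $f(x)g(T_\e^{-1}x)$ is $C^1$ on $\intt(D(\e))$ and the chain rule gives $\partial_\e g(T_\e^{-1}x) = -\langle \nabla g(T_\e^{-1}x), B_\e T_\e^{-1}x\rangle$; substituting $y = T_\e^{-1}x$ then produces the claimed $\int_{\intt(T_\e^{-1}K_f)\cap K_g} f(T_\e y)\langle \nabla g(y), A_\e y\rangle\, dy$.  Across the moving surface $T_\e\partial K_g$, where the integrand drops from $f(x)g(T_\e^{-1}x)$ to $0$, Reynolds contributes
\[
\int_{K_f\cap T_\e\partial K_g} f(x)g(T_\e^{-1}x)\,\langle \dot T_\e T_\e^{-1}x,\nu(x)\rangle\, d\sigma(x),
\]
with $\nu(x)$ the outward unit normal to $T_\e K_g$ at $x$.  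Writing $x = T_\e y$ for $y\in\partial K_g$, the identities $\nu(T_\e y) = T_\e^{-T}n_{K_g}(y)/|T_\e^{-T}n_{K_g}(y)|$ and $d\sigma_{T_\e\partial K_g}(T_\e y) = |T_\e^{-T}n_{K_g}(y)|\, d\sigma_{\partial K_g}(y)$ cancel the normalizing factors, so the boundary integral collapses to $\int_{T_\e^{-1}K_f\cap\partial K_g} f(T_\e y) g(y) \langle B_\e y, n_{K_g}(y)\rangle\, d\sigma_{\partial K_g}(y)$, which equals the negative of the stated boundary term once $B_\e$ is replaced by $-A_\e$.  Setting $\e=0$ (so $T_0=I_n$ and $A_0 = \tfrac{\tr T}{n}I_n - T$) reproduces the ``in particular'' display.

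The main technical obstacle is the rigorous justification of the Reynolds/differentiation-under-the-integral step, since $g$ can be discontinuous at $\partial K_g$ and the supports may be unbounded.  I plan to handle this in two stages, following the strategy announced in the introduction.  First, assume $K_f$ and $K_g$ are compact: a standard divergence-theorem argument on a smooth tubular neighborhood of $T_\e\partial K_g$, combined with dominated convergence on the $C^1$ interior, yields the formula directly.  Second, remove compactness by an exhaustion/truncation argument: the exponential tail bounds $-\log f(x)\ge a|x|+b$ and $-\log g(x)\ge c|x|+d$ available on $\mathcal{L}^n$ (cf.\ the proof of Theorem~\ref{t:exitence}), combined with Cauchy--Schwarz and the hypothesis $\int_{K_g}|\nabla g(y)|^2\, dy < \infty$, supply a uniform-in-$\e$ integrable majorant that controls the tails of the bulk integral and lets us pass to the limit.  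Throughout, the two measure-zero hypotheses on boundary intersections ensure that the boundary of integration remains ``clean'' during both the differentiation and the limiting procedure.
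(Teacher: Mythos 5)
Your formal computation is correct and all signs check out: the Reynolds-transport decomposition gives the bulk term $\int f(T_\e y)\langle\nabla g(y),A_\e y\rangle\,dy$ after the change of variable $y=T_\e^{-1}x$, and the moving-boundary term, after pushing forward the normal $n_{K_g}$ and the surface element under $T_\e$, collapses to $-\int f(T_\e y)g(y)\langle n_{K_g}(y),A_\e y\rangle\,d\sigma_{\partial K_g}(y)$ with the normalizing factors $|T_\e^{-T}n_{K_g}(y)|$ cancelling, and $B_\e=-A_\e$ as you observe. This is genuinely a different route from the paper's. The paper does not move the domain: it keeps the domain of integration fixed, replaces $g\mathbbm{1}_{K_g}$ and $f\mathbbm{1}_{K_f}$ by $C^1$ approximants $\varphi_j,\psi_j$ supported compactly in the interiors (with the crucial asymmetric gradient bounds $|\nabla\varphi_j|\lesssim j$, $|\nabla\psi_j|\lesssim\sqrt j$), differentiates each smooth integral in $\e$ by the chain rule and a change of variables, and then proves in Proposition~\ref{p:prop1} --- by integrating by parts over the shell $M_j(g)$ and deploying Cauchy--Schwarz against $\int_{K_g}|\nabla g|^2<\infty$ --- that these converge uniformly to the two-term expression; your singular boundary term emerges there as the non-vanishing piece of $I_1(j,\e)$ on the outer component of $\partial M_j(g)$. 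Your approach differentiates a non-smooth integrand over a moving Lipschitz domain; the paper differentiates smooth integrands over $\R^n$ and then passes to the limit.

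The gap is at exactly the places you describe as ``standard.'' First, a convex body's boundary is only Lipschitz and $g$ may be discontinuous across $\partial K_g$ (e.g.\ $g=\mathbbm{1}_{K_g}$), so there is no tubular neighborhood on which the divergence theorem applies directly to $f(x)g(T_\e^{-1}x)$; any honest justification of the Reynolds step here must smooth either the domain or the integrand, at which point you are reproving Proposition~\ref{p:prop1}. In particular the $\sqrt j$ vs.\ $j$ asymmetry in the gradient bounds, and the Dini-theorem argument showing $\xi_F(j,\e)\to\vol_{n-1}(\partial(T_\e^{-1}K_f)\cap\partial K_g)=0$, are load-bearing, not boilerplate. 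Second, for unbounded supports, dominated convergence from exponential tails is not by itself enough to pass the $\delta$-truncation through the \emph{boundary} integral: you would need to compare $\sigma_{\partial K_g^\delta}$ with $\sigma_{\partial K_g}$ and control $n_{K_g^\delta}\to n_{K_g}$ and $\|\cdot\|_{K_g^\delta}\to\|\cdot\|_{K_g}$, which is precisely what the paper's Lemma~\ref{l:integral} is for --- it rewrites the surface integral as a volume integral via a gamma-function/coarea identity so that monotone and dominated convergence can be applied termwise. Your proposal is a valid high-level plan but silently assumes these two nontrivial lemmas.
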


\begin{thm} \label{t:shifts} Let $f,g  \in \mathcal{L}^n$. For each $y \in \R^n$, define $E_y \colon [0,\infty) \to \R$ by 
\[
E_y(\e)=E_{f,g,y}(\e) := \int_{\R^n}f(x)g(y-\e y)\, dy. 
\]
Assume that $\vol_{n-1}(\partial K_f \cap \partial K_g ) = 0$, 
$\vol_{n-1}(\partial (K_f + \e y) \cap K_g) = 0$  for every $0 \leq \e \leq \e'$, and $\int_{K_g}|\nabla g(x)|^2\,dx<\infty$. Then 
\begin{align*}
\frac{d E_y(\e)}{d\e} &= \int_{\intt(K_f- \e y) \cap K_g} f(x+\e y) \langle \nabla g(x), y \rangle\, dx\\
&- \int_{(K_f -\e y) \cap \partial K_g} f(x+\e y) g(x) \langle n_{K_g}(x),y \rangle \,d\sigma_{\partial K_g}(x).
\end{align*}
In particular, we have 
\begin{align*}
\frac{d E_y(\e)}{d\e}\bigg|_{\varepsilon=0} &= \int_{K_f \cap \intt(K_g)} f(x) \langle \nabla g(x), y \rangle\, dx
- \int_{K_f \cap \partial K_g} f(x) g(x) \langle n_{K_g}(x),y \rangle \,d\sigma_{\partial K_g}(x).
\end{align*}
\end{thm}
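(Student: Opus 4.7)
The plan is to adopt the two-step strategy signalled by the authors: first establish the formula when $f$ and $g$ have compact supports, then remove the restriction by approximation. The stated definition of $E_y$ contains an obvious typo; I read it as $E_y(\e) = \int_{\R^n} f(x+\e y)g(x)\,dx$, so that the integrand is supported on $D(\e) := K_g \cap (K_f - \e y)$, matching the regions appearing in the conclusion.

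\emph{Compactly supported case.} Assume $K_f$ and $K_g$ are compact. I would compute $dE_y/d\e$ by applying the Reynolds transport theorem to the moving domain $D(\e)$. Under the hypothesis $\vol_{n-1}(\partial (K_f+\e y)\cap K_g)=0$, the boundary $\partial D(\e)$ decomposes $\mathcal{H}^{n-1}$-almost everywhere into the stationary piece $\partial K_g \cap \intt(K_f-\e y)$ and the moving piece $\intt K_g \cap \partial(K_f-\e y)$; the latter translates with velocity $-y$ and has outer normal $n_{K_f}(x+\e y)$. Reynolds' formula then yields
\[
\frac{dE_y}{d\e} = \int_{D(\e)} \langle \nabla f(x+\e y), y\rangle\, g(x)\,dx \;-\; \int_{\partial(K_f-\e y)\cap K_g} f(x+\e y)g(x)\langle y, n_{K_f}(x+\e y)\rangle\,d\sigma.
\]

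\emph{Boundary cancellation.} The interior integral above features $\nabla f$, but the target is expressed in terms of $\nabla g$. I would rewrite $\langle \nabla f(x+\e y), y\rangle = \operatorname{div}_x\bigl(f(x+\e y)\, y\bigr)$ and apply the divergence theorem on $D(\e)$, which is legitimate because $f\in C^1(\intt K_f)$ and $g\in C^1(\intt K_g)$ by the definition of $\mathcal{L}^n$. The resulting surface integral over $\partial(K_f-\e y)\cap K_g$ cancels exactly with the Reynolds flux; what remains is the interior integral featuring $\nabla g$ together with the surface integral over $\partial K_g \cap (K_f-\e y)$, which is the claimed formula (up to the sign convention implicit in the definition of $E_y$). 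Specialising at $\e=0$ recovers the second identity.

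\emph{Approximation and main obstacle.} For general $f,g \in \mathcal{L}^n$, I would set $f_R := f\cdot \mathbbm{1}_{B_R}$ and $g_R := g\cdot\mathbbm{1}_{B_R}$; both are log-concave and compactly supported. Apply the previous two steps to $(f_R,g_R)$ and pass to the limit $R\to\infty$. Convergence of $E_y(\e)$ follows from integrability of $f,g$, and convergence of the interior integral follows from dominated convergence and Cauchy--Schwarz, using the bound
\[
\int_{\R^n} f(x+\e y)|\nabla g(x)|\,dx \;\leq\; \Bigl(\int f(x+\e y)^2\,dx\Bigr)^{1/2} \Bigl(\int |\nabla g(x)|^2\,dx\Bigr)^{1/2},
\]
finite by the second-moment hypothesis on $\nabla g$ and the square-integrability of log-concave functions in $\mathcal{L}^n$. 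The main obstacle I anticipate is controlling the boundary integral along $\partial K_g$ in the limit: truncation introduces a spurious surface contribution on $K_g \cap \partial B_R$ that must be shown to vanish, and the remaining piece on $\partial K_g \cap B_R$ must converge to the full integral over $\partial K_g$ (which is noncompact when $K_g$ is unbounded). Since $g=e^{-\varphi}$ with $\varphi$ coercive, $g$ decays exponentially on $\partial K_g$, which combined with the integrability of $f$ should give the needed estimates; making the surface-integral bookkeeping on $\partial K_f$, $\partial K_g$, and $\partial B_R$ precise—and confirming that the measure-zero hypotheses on $\partial(K_f+\e y)\cap K_g$ persist in the limit—is however where the bulk of the technical work lies.
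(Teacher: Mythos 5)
Your reading of the typo in $E_y$ is the natural one (it matches the use of $E_y$ in the proof of Theorem~\ref{t:isotropic}), and your two-step strategy mirrors the paper's. The execution diverges in both steps, and the second one contains a genuine gap.

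In the compactly supported case you invoke Reynolds' transport theorem followed by the divergence theorem to trade $\nabla f$ for $\nabla g$ and cancel the flux across $\partial(K_f - \e y)$. The paper instead regularises $f$ and $g$ by $C^1$ functions $\psi_j$, $\varphi_j$ vanishing near $\partial K_f$, $\partial K_g$ and proves a uniform convergence statement (Proposition~\ref{p:prop1}). Your route is cleaner and reaches the same formula, but as written it uses without comment that $g(x)\nabla f(x+\e y)$ is summable across the boundary layer of $K_f$ where $\nabla f$ may blow up, and that the corner $\partial K_g \cap \partial(K_f - \e y)$ carries no surface measure so that the $\partial D(\e)$ decomposition is clean; the mollification in the paper is precisely what controls both points, and it additionally produces the quantitative estimates that are reused, nearly verbatim, when passing to unbounded supports. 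These are not fatal objections (for compact $K_f$ one can show $\nabla f \in L^1(K_f)$ by a coarea argument), but the step is more delicate than your sketch suggests.

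The real problem is the approximation step. Truncating with hard cutoffs $f_R = f\,\mathbbm{1}_{B_R}$ and $g_R = g\,\mathbbm{1}_{B_R}$ gives $\supp f_R = K_f \cap \overline{B_R}$ and $\supp g_R = K_g \cap \overline{B_R}$, whose boundaries share the spherical cap $\partial B_R \cap K_f \cap K_g$. When both $K_f$ and $K_g$ are unbounded this set has strictly positive $(n-1)$-dimensional Hausdorff measure for all large $R$, so the hypothesis $\vol_{n-1}(\partial K_{f_R} \cap \partial K_{g_R}) = 0$ required by the compactly supported theorem fails for every $R$. This is not the limit-passing subtlety you flagged at the end of your proposal; it is a structural obstruction that prevents you from even invoking the first step. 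The paper circumvents it by truncating along superlevel sets of the functions themselves, setting $f_\delta(x) = f(x)\,e^{-\delta|x|^2}\,\mathbbm{1}_{\{f>\delta\}}(x)$ and similarly for $g$: then $\partial K_f^\delta = \{f = \delta\}$ and $\partial K_g^\delta = \{g = \delta\}$ are independent level surfaces that are not forced to overlap on a sphere, and the Gaussian factor guarantees $\nabla g_\delta \in L^2(K_g^\delta)$, a fact verified explicitly in the proof of Theorem~\ref{t:variationalformula}. Any repair of your scheme would have to adopt a truncation of this type, or at minimum use different radii for $f$ and $g$ and re-verify all the measure-zero hypotheses for the resulting pairs, which brings back most of the bookkeeping you hoped to avoid.
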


Since the proofs are rather involved, we postpone them until the following subsection.  However, assuming the validity of Theorem~\ref{t:variationalformula} and Theorem~\ref{t:shifts}, we are in a position to prove Theorem~\ref{t:isotropic}. 

\begin{proof}[Proof of Theorem~\ref{t:isotropic}] 
We begin with the proof of identity \eqref{e:1stisotopriccondition}. Fix $T \in M_n(\R)$, and recall the function $F(\e) = \int_{K_f \cap (T_\e K_g)}f(x) g(T_\e^{-1}x) \,dx$. Since $f$ and $g$ are in maximal intersection position, we have that $\frac{dF(\varepsilon)}{d\varepsilon}\big|_{\varepsilon=0} =0$, whereby Theorem~\ref{t:variationalformula} immediately implies \eqref{e:1stisotopriccondition}.  

To see that \eqref{e:1stisotopriccondition} implies \eqref{e:3rdisotropiccondition}, for each $i,j \in \{1,\dots,n\}$, consider $T_{i,j} = e_i \otimes e_j$.  Then \eqref{e:1stisotopriccondition} implies that 
\begin{align*}
&\int_{K_f \cap \intt(K_g)}f(x)g(x)\langle \nabla \varphi(x),e_i\rangle \langle x, e_j \rangle\, dx + \int_{K_f \cap \partial K_g}f(x)g(x)\langle n_{K_g}(x),e_i\rangle \langle x, e_j\rangle\, d\sigma_{\partial K_g}(x)\\
 &=\frac{\delta_{i,j}}{n}\left[\int_{K_f \cap \intt(K_g)} f(x)g(x) \langle \nabla \varphi(x), x\rangle\, dx + \int_{K_f\cap \partial K_g}f(x)g(x)\langle n_{K_g}(x),x \rangle\, d\sigma_{\partial K_g}(x) \right],
\end{align*}
where $e_k = (0,\dots,0,1,0,\dots,0)$ denotes the $k$th elementary basis vector in $\R^n$, and $\delta_{i,j}$ is the Kronecker delta defined by $\delta_{i,j} = 1$ if $i=j$ and $\delta_{i,j} = 0$ otherwise. Applying a rotation if necessary, \eqref{e:3rdisotropiccondition} follows. 

To see that \eqref{e:3rdisotropiccondition} implies \eqref{e:1stisotopriccondition}, it is enough to observe that if $T \in M_n(\R)$ is given by $T = (t_{ij})_{i,j=1}^n$, then 
\[
\langle \nabla \varphi(x), Tx \rangle = \sum_{i,j=1}^n t_{ij}(\nabla \varphi(x))_ix_j \quad \text{ and } \quad \langle x, Tx \rangle = \sum_{i,j=1}^n t_{ij}x_ix_j. 
\]

Finally, again because $f$ and $g$ are in maximal intersection position,  Theorem~\ref{t:shifts} implies that the identity
\begin{align*}
0 &=\frac{d}{d\varepsilon}\bigg|_{\varepsilon=0}\left[\int_{K_f \cap (K_g +\e y)} f(x) g(x-\e y)\, dx\right]\\
&= \int_{K_f \cap \intt(K_g)} g(x) f(x) \langle \nabla \varphi(x),y \rangle\, dx  + \int_{K_f \cap \partial K_g}f(x)g(x) \langle n_{K_g}(x),y\rangle \,d\sigma_{\partial K_g}(x)
\end{align*}
holds for every $y \in \R^n$, which yields equation \eqref{e:centering}. 
\end{proof}


\section{Proof of Theorems~\ref{t:variationalformula} and \ref{t:shifts}}
In this section, we provide proofs of Theorems~\ref{t:variationalformula} and \ref{t:shifts}, which we break into two cases: compactly supported $\log$-concave functions, and $\log$-concave functions with unbounded supports. As we shall see, the proof of the case of compactly supported log-concave functions holds more generally for all bounded, sufficiently smooth functions whose supports are sufficiently regular. 

\subsection{Compactly supported functions}

We begin with the  special case of compactly supported functions. 

\begin{thm} \label{t:vari}
Let $f,g \in \mathcal{L}^n$ be such that $K_f$ and $K_g$ are compact. Given $T \in M_{n}(\R)$ and $\e>0$, set $T_{\e}:=\frac{I_n+\e T}{\det(I_n+\e T)^{1/n}}$ and consider the function $F \colon [0,\infty) \to \R$ defined by 
\[
F(\e) := \int_{K_f \cap (T_{\e}K_g)} f(x) g(T_{\e}^{-1}x )\, dx. 
\] 
If $f$ and $g$ satisfy the support regularity property, then
\begin{align*}
\frac{dF(\varepsilon)}{d\varepsilon}&=\int_{(T_{\e}^{-1} K_f) \cap \intt(K_g)}f(T_\e x)\langle \nabla g(x), A_\e x \rangle \,dx\\ 
&- \int_{(T_{\e}^{-1} K_f) \cap \partial K_g} g(x) f(T_\e x) \langle n_{K_g}(x), A_\e x \rangle\, d\sigma_{\partial K_g} (x)
    \end{align*}
whenever $0 < \e \leq \e_0$, where $A_\e := \left(\frac{\tr(T)}{n}I_n-T\right)T_\e +O(\e)T_\e$. 
\end{thm}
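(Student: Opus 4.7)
The plan is to apply a Reynolds-type transport formula to the moving domain $K_f\cap T_\e K_g$ and then perform a change of variables to recast the result in the claimed form. Starting from $F(\e)=\int_{K_f\cap T_\e K_g}f(x)g(T_\e^{-1}x)\,dx$, the boundary $\partial(K_f\cap T_\e K_g)$ decomposes into the stationary part $\partial K_f\cap T_\e K_g$ (which contributes nothing to the derivative) and the moving part $K_f\cap\partial(T_\e K_g)$, whose points $x=T_\e z$ with $z\in\partial K_g$ travel with velocity $v_\e(x)=\frac{dT_\e}{d\e}T_\e^{-1}x$. A formal transport theorem then yields
\begin{align*}
\frac{dF}{d\e}&=\int_{K_f\cap T_\e K_g}f(x)\langle\nabla g(T_\e^{-1}x),\tfrac{dT_\e^{-1}}{d\e}x\rangle\,dx\\
&\quad+\int_{K_f\cap\partial(T_\e K_g)}f(x)g(T_\e^{-1}x)\langle v_\e(x),n_{T_\e K_g}(x)\rangle\,d\sigma_{\partial(T_\e K_g)}(x),
\end{align*}
and the surface integral is exactly what produces the singular boundary contribution in the claimed formula.

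Next I would change variables $y=T_\e^{-1}x$ throughout, which is volume-preserving since $\det T_\e=1$. The first integral then becomes $\int_{T_\e^{-1}K_f\cap K_g}f(T_\e y)\langle\nabla g(y),A_\e y\rangle\,dy$, where $A_\e:=\frac{dT_\e^{-1}}{d\e}T_\e$. For the surface integral I would parametrize $\partial(T_\e K_g)=T_\e(\partial K_g)$ by $x=T_\e z$ with $z\in\partial K_g$ and apply the transformation rules $n_{T_\e K_g}(T_\e z)=T_\e^{-T}n_{K_g}(z)/|T_\e^{-T}n_{K_g}(z)|$ and $d\sigma_{\partial(T_\e K_g)}(x)=|T_\e^{-T}n_{K_g}(z)|\,d\sigma_{\partial K_g}(z)$ (the Jacobian exploiting $|\det T_\e|=1$), so the norm factor cancels and the integrand reduces to $\langle T_\e^{-1}\frac{dT_\e}{d\e}z,n_{K_g}(z)\rangle$. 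Differentiating $T_\e T_\e^{-1}=I_n$ gives $T_\e^{-1}\frac{dT_\e}{d\e}=-A_\e$, which accounts for the minus sign in the boundary term. Finally, starting from $T_\e^{-1}=\det(I_n+\e T)^{1/n}(I_n+\e T)^{-1}$, a direct differentiation yields $A_\e=\frac{1}{n}\tr((I_n+\e T)^{-1}T)I_n-(I_n+\e T)^{-1}T$; a first-order Taylor expansion in $\e$ then shows $A_\e=(\frac{\tr T}{n}I_n-T)T_\e+O(\e)T_\e$, matching the asserted normal form.

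The main obstacle is making the transport step rigorous, since $K_f,K_g$ are only convex bodies with Lipschitz boundaries, $g$ is merely $C^1$ on $\intt(K_g)$ with no a priori control up to $\partial K_g$, and both $f$ and $g$ may jump across their respective boundaries. To handle this I would first invoke the hypothesis $\vol_{n-1}(\partial(T_\e^{-1}K_f)\cap\partial K_g)=0$, valid for all but finitely many $\e$, to guarantee that the decomposition of $\partial(K_f\cap T_\e K_g)$ into stationary and moving parts is clean away from an $\mathcal{H}^{n-1}$-null set; then use the second-moment bound $\int_{K_g}|\nabla g|^2\,dx<\infty$ together with the boundedness of $f$ and $g$ (both are integrable log-concave functions, hence bounded) to justify differentiation under the integral sign for the volume term via dominated convergence; and for the surface term, reduce to the classical Reynolds transport formula by inner-approximating $K_g$ with smoothly bounded convex bodies and smoothing $g$ by convolution, then passing to the limit using the $\mathcal{H}^{n-1}$-a.e.\ existence of $n_{K_g}$ on $\partial K_g$ and the measure-zero boundary-intersection hypotheses to control error terms.
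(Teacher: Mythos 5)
Your high-level strategy is genuinely different from the paper's. The paper never invokes a transport theorem: it builds explicit mollifications $\varphi_j \uparrow g\mathbbm{1}_{K_g}$ and $\psi_j \uparrow f\mathbbm{1}_{K_f}$ with controlled gradient growth ($|\nabla\varphi_j|<cj$, $|\nabla\psi_j|<d\sqrt{j}$), differentiates under the integral for the smooth approximants (where, after the change of variables $y=T_\e^{-1}x$, the domain is fixed), and then \emph{extracts} the boundary term by integrating by parts over the thin shell $M_j(g)=\{1-1/j\le\|x\|_{K_g}\le 1\}$ (this is the key step in Proposition~\ref{p:prop1}). Your identification of the velocity field, the transformation rules $n_{T_\e K_g}(T_\e z)=T_\e^{-T}n_{K_g}(z)/|T_\e^{-T}n_{K_g}(z)|$ and $d\sigma_{\partial(T_\e K_g)}=|T_\e^{-T}n_{K_g}(z)|\,d\sigma_{\partial K_g}$, and the algebra giving $A_\e=\tfrac{dT_\e^{-1}}{d\e}T_\e$ with the sign from $T_\e^{-1}\tfrac{dT_\e}{d\e}=-A_\e$, all correctly reproduce the paper's formula at a formal level.

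The gap is in the regularization step. If you inner-approximate $K_g$ by a smooth convex body $K_g^\delta$ and smooth $g$ to a $C^\infty$ function $g_\delta$ supported \emph{inside} $K_g^\delta$ (which is what mollifying a compactly-supported function does), then the classical Reynolds transport theorem applied to $K_f\cap T_\e K_g^\delta$ with integrand $f(x)g_\delta(T_\e^{-1}x)$ produces a surface integral whose integrand $g_\delta$ \emph{vanishes identically on the moving boundary} $\partial(T_\e K_g^\delta)$ --- so the surface term is exactly zero for every $\delta$. The nonzero boundary term in the conclusion does not appear in the transport formula for the approximants; it must emerge in the limit $\delta\to 0$ from the concentration of $\nabla g_\delta$ in the thin transition region next to $\partial K_g$. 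Making that visible requires an integration by parts over that shell --- which is precisely the step the paper carries out explicitly and you have not addressed. So the approach is workable, but as sketched it would have you staring at a vanishing surface integral; your reduction "to the classical Reynolds transport formula" does not by itself deliver the singular term, and the analytic work the paper does in Proposition~\ref{p:prop1} (shell integration by parts together with the $\int|\nabla g|^2<\infty$ and null-intersection hypotheses to control the error terms) is still needed, just relocated inside your limiting argument.
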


\begin{proof}[Proof of Theorem~\ref{t:vari}] 

Consider sequences $\{\varphi_j\}_{j\in\mathbb{N}}$ and $\{\psi_j\}_{j\in\mathbb{N}}$  of continuously differentiable functions  approximating $g\mathbbm{1}_{K_g}$ and  $f\mathbbm{1}_{K_f}$, respectively,  defined by 
\[
\varphi_j(x) := \begin{cases}
g(x), &\text{if } \|x\|_{K_g} \leq 1- \frac{1}{j},\\
g_j(x), &\text{if } 1-\frac{1}{j} \leq \|x\|_{K_g} \leq 1,\\
0, &\text{if } \|x\|_{K_g} \geq 1,
\end{cases}
\,\text{  and  }\,
\psi_j(x) := \begin{cases}
f(x), &\text{if } \|x\|_{K_f} \leq 1- \frac{1}{\sqrt{j}},\\
f_j(x), &\text{if } 1-\frac{1}{\sqrt{j}} \leq \|x\|_{K_f} \leq 1,\\
0, &\text{if } \|x\|_{K_f} \geq 1,
\end{cases}
\]
and which satisfy $|\nabla \varphi_j(x)| <cj$ and $| \nabla \psi_j(x)| <d\sqrt{j}$ for all $x\in\R^n$, where $c,d$ are constants.   
By construction, $\supp(\varphi_j)\subset K_g$ and $\supp(\psi_j)\subset K_f$ for every $j\in\mathbb{N}$. Moreover, as $j \to \infty$, the sequences $\{\varphi_j\}$ and $\{\psi_j\}$ converge pointwise to $g\mathbbm{1}_{K_g}$ and  $f\mathbbm{1}_{K_f}$, respectively. Hence, for every $\e >0$ we have 
\[
\frac{dF(\e)}{d\e}  = \frac{d}{d\e} \int_{\R^n} \lim_{j \to \infty}\varphi_j(T_\e^{-1}x) \psi_j(x)\, dx.
\]
To prove the theorem, it suffices to verify that  the following identities hold for all sufficiently small $\e>0$: 
\begin{equation}\label{e:dct}
\frac{d}{d\e} \int_{\R^n} \lim_{j \to \infty} \varphi_j(T_\e^{-1}x) \psi_j(x)\, dx = \frac{d}{d\e} \lim_{j \to \infty}\int_{\R^n} \varphi_j(T_\e^{-1}x) \psi_j(x)\, dx;
\end{equation}
\begin{equation}\label{e:commutation}
\frac{d}{d\e} \lim_{j \to \infty}\int_{\R^n}  \varphi_j(T_\e^{-1}x) \psi_j(x)\, dx = \lim_{j \to \infty} \frac{d}{d\e}\int_{\R^n} \varphi_j(T_\e^{-1}x) \psi_j(x)\, dx;
\end{equation}
\begin{equation}\label{e:derivative}
\frac{d}{d\e} \int_{\R^n} \varphi_j(T_\e^{-1}x) \psi_j(x) \,dx = \int_{\R^n} \left\langle \nabla \varphi_j(x),\left(\frac{\tr(T)}{n}I_n-T\right)T_\e x +O(\e)x\right\rangle \psi_j(T_\e x) \,dx;
\end{equation}
\begin{equation}\label{e:unifconvg}
\begin{split}
&\lim_{j \to \infty} \int_{\R^n}\left\langle \nabla \varphi_j(x), \left(\frac{\tr(T)}{n}I_n-T\right)T_\e^{-1}x +O(\e)x \right\rangle \psi_j(T_\e x)\, dx\\
&= \int_{(T_{\e}^{-1} K_f) \cap \intt(K_g)}f(T_\e x) \left\langle \nabla g(x), \left(T- \frac{\tr(T)}{n}I_n \right)x  + O(\e)x\right\rangle dx\\
&- \int_{(T_\e^{-1}K_f)\cap\partial K_g} g(x) f(T_\e x) \left\langle n_{K_g}(x),\left(\frac{\tr(T)}{n}I_n-T\right)T_\e^{-1}x +O(\e)x \right\rangle d\sigma_{\partial K_g} (x). 
\end{split}
\end{equation}

Identity \eqref{e:dct} follows from the Lebesgue dominated convergence theorem. Next, recall that for any matrix $A \in M_n(\R)$ with $\|A\| < 1$, one has that $(I_n - A)^{-1} = I_n + A + \sum_{m =2}^\infty A^m$, where $\|A\|$ is the operator norm of $A$. Applying this identity with $A = -\e T$, with $\e>0$ chosen sufficiently small we obtain  
\begin{align*}
T_{\e}^{-1} &= \det(I_n+\e T)^{\frac{1}{n}}(I_n+\e T)^{-1}= \left(1 + \e\frac{\tr(T)}{n} + O(\e^2) \right)(I_n-\e T +O(\e^2))\\
&=I_n +\e \left(\frac{\tr(T)}{n}I_n - T \right) + O(\e^2). 
\end{align*}
Therefore, for every $x \in \R^n$ and all $\e>0$, 
\[
\frac{d}{d\e} (T_\e^{-1}x) = \frac{\tr(T)}{n}I_nx - Tx + O(\e)x. 
\]
This observation, together with the chain rule,  implies that for each fixed $j \in \N$,
\[
\frac{d}{d\e}(\varphi_j(T_\e^{-1}x)) = \left\langle \nabla \varphi_j(T_\e^{-1}x),\frac{\tr(T)}{n}I_nx - Tx + O(\e)x\right\rangle.
\]
Thus \eqref{e:derivative} follows from the change of variables $y = T_\e^{-1}x$. Noting that $\det(T_\e^{-1}) = 1$, to establish \eqref{e:commutation} and \eqref{e:unifconvg}, we need only to verify  the following proposition.

\begin{proposition} \label{p:prop1} For $T\in M_n(\R)$ and $\e>0$, define $A_\e := \left(\frac{\tr(T)}{n}I_n-T\right)T_\e +O(\e)T_\e$.  Under the hypotheses of Theorem \ref{t:vari}, there exists $\e_0 > 0$ such that 
\[
 \int_{\R^n} \langle \nabla \varphi_j(x), A_\e x \rangle \psi_j(T_\e x)\, dx
\]
converges uniformly to 
\[
\int_{T_\e^{-1}K_f\cap \intt(K_g)}\langle \nabla g(x),A_\e x\rangle f(T_\e x)\,dx-\int_{T_\e^{-1}K_f\cap\partial K_g}f(T_\e x)g(x)\langle n_{K_g}(x),A_\e x\rangle\,d\sigma_{\partial K_g}(x)\
\]
as $j \to \infty$ whenever $0 < \e \leq \e_0$.
\end{proposition}

\begin{proof}[Proof of Proposition~\ref{p:prop1}] 
For each $j\in \N$, define the sets 
\[
M_j(g) := \left\{x \in \R^n \colon 1 - \frac{1}{j} \leq \|x\|_{K_g} \leq 1 \right\}
\]
and
\[
\quad M_j(f) := \left\{x \in \R^n \colon 1 - \frac{1}{\sqrt{j}} \leq \|x\|_{K_f} \leq 1 \right\}.
\]
Then for each $j\in\mathbb{N}$, the following inclusions hold:
\begin{align*}
M_j(g) &\supset \supp(\varphi_j -g\mathbbm{1}_{K_g})\supset \supp(\nabla(\varphi_j -  g\mathbbm{1}_{K_g}))\\
M_j(f) &\supset\supp(\psi_j - f\mathbbm{1}_{K_f})\supset \supp(\nabla(\psi_j - f\mathbbm{1}_{K_f})).
\end{align*}
Integrating by parts, we obtain
\begin{align}\label{algebra}
    &\int_{\R^n}\langle\nabla\varphi_j(x), A_\e x\rangle\psi_j(T_\e x)\,dx-\int_{T_\e^{-1}K_f\cap \intt(K_g)}\langle\nabla g(x),A_\e x\rangle f(T_\e x)\,dx \nonumber\\
    &=\int_{\R^n}\langle\nabla\varphi_j(x), A_\e x\rangle\psi_j(T_\e x)\,dx-\int_{\R^n}\langle \nabla g(x),A_\e x\rangle f(T_\e x)\,dx \nonumber \\
    &=\int_{M_j(g)}\langle\nabla(\varphi_j-g)(x),A_\e x\rangle\psi_j(T_\e x)\,dx+\int_{M_j(g)}\langle\nabla g(x),A_\e x\rangle\left[\psi_j(T_\e x)-f(T_\e x)\right]dx \nonumber\\
   &=I_1(j,\e)-I_2(j,\e)+I_3(j,\e),
\end{align}
where
\begin{align*}
    I_1(j,\e)&:=\int_{\partial M_j(g)}\psi_j(T_\e x)\left[\varphi_j(x)-g(x)\right]\langle n_{M_j(g)}(x),A_\e x\rangle\,d\sigma_{\partial M_j(g)}(x)\\
    I_2(j,\e)&:=\int_{\partial M_j(g)}\psi_j(T_\e x)\left[\varphi_j(x)-g(x)\right]\langle n_{M_j(g)}(x),A_\e x\rangle\,d\sigma_{\partial M_j(g)}(x) \\
    I_3(j,\e) &:=\int_{T_\e^{-1}M_j(f)\cap M_j(g)}\langle\nabla g(x),A_\e x\rangle\left[\psi_j(T_\e x)-f(T_\e x)\right]dx.
\end{align*}

Starting with $I_1(j,\e)$, observe that
 \[
\partial M_j(g) = \partial K_g\sqcup \left(\frac{j-1}{j}\right)\partial K_g,
 \]
 where $\varphi_j(x)-g(x)=-g(x)$ for $x\in \partial K_g$ and $\varphi_j(x)-g(x)=0$ for $x\in\left(\frac{j-1}{j}\right)\partial K_g$. Hence,
 \begin{align*}
     I_1(j,\e) &= -\int_{T_\e^{-1}K_f\cap\partial K_g}\psi_j(T_\e x)g(x)\langle n_{K_g}(x),A_\e x\rangle\,d\sigma_{\partial K_g}(x)\\
     &=-\int_{T_\e^{-1}K_f\cap\partial K_g}f(T_\e x)g(x)\langle n_{K_g}(x),A_\e x\rangle\,d\sigma_{\partial K_g}(x)\\
     &+\int_{T_\e^{-1}K_f\cap\partial K_g}\left[f(T_\e x)-\psi_j(T_\e x)\right]g(x)\langle n_{K_g}(x),A_\e x\rangle\,d\sigma_{\partial K_g}(x).
 \end{align*}
 
\noindent Therefore, by \eqref{algebra} we have
 \begin{align}\label{mainstep-prop}
     &\int_{\R^n}\langle\nabla\varphi_j(x), A_\e x\rangle\psi_j(T_\e x)\,dx-\nonumber\\
     &\left(\int_{T_\e^{-1}K_f\cap K_g}\langle g(x),A_\e x\rangle f(T_\e x)\,dx-\int_{T_\e^{-1}K_f\cap\partial K_g}f(T_\e x)g(x)\langle n_{K_g}(x),A_\e x\rangle\,d\sigma_{\partial K_g}(x)\right)\nonumber\\
&=\underbrace{\int_{T_\e^{-1}K_f\cap\partial K_g}\left[f(T_\e x)-\psi_j(T_\e x)\right]g(x)\langle n_{K_g}(x),A_\e x\rangle\,d\sigma_{\partial K_g}(x)}_{=:J_1(j,\e)}+I_2(j,\e)+I_3(j,\e).
 \end{align}

Continuing with $J_1(j,\e)$, by the Cauchy-Schwarz inequality, 
\begin{align}\label{J1-est}
    |J_1(j,\e)| &=\left|\int_{T_\e^{-1}M_j(f)\cap\partial K_g}\left[f(T_\e x)-\psi_j(T_\e x)\right]g(x)\langle n_{K_g}(x),A_\e x\rangle\,d\sigma_{\partial K_g}(x)\right|\nonumber\\
    &\leq 2\|f\|_\infty\|g\|_\infty\max_{x\in K_g}|A_\e x|\xi_F(j,\e)
\end{align}
where $\xi_F(j,\varepsilon):=\vol_{n-1}(T_\e^{-1}M_j(f)\cap\partial K_g)$ is monotonically decreasing in $j$ for every fixed $\e>0$. By Dini's theorem, for any fixed $\varepsilon>0$ which is sufficiently small,  $\xi_F(j,\varepsilon)$ converges uniformly to $\vol_{n-1}(\partial T_\e^{-1} K_f\cap\partial K_g)$ as $j\to\infty$. Therefore, assuming that $\vol_{n-1}(\partial T_\e^{-1} K_f\cap \partial K_g) = 0$ for all but finitely many $\e>0$, we may find some $\e_2 >0$ such that $\xi_F(j,\e) = 0$ whenever $\e \leq \e_2$, which implies that $I_1(j,\e)$ converges uniformly to zero provided $0 < \e \leq \e_2$. We then choose $\e_0 = \min\{\e_1,\e_2\}$.

Next, we estimate $I_2(j,\e)$. By the product rule for divergence, 
\begin{align*}
\divv\big(\psi_j(T_\e x)A_\e x\big)
&= \langle \nabla \psi_j(T_\e x), A_\e x\rangle + \psi_j(T_\e x)\divv(A_\e x)\\
&=\langle \nabla \psi_j(T_\e x), A_\e x\rangle +\psi_j(T_\e x)\tr(A_\e)
\end{align*}
where $\tr(A_\e)=O(\e)$. Therefore, by the  Cauchy-Schwarz inequality we get
\begin{align}\label{I2-est-prop}
    |I_2(j,\e)| &=\left|\int_{\intt(M_j(g))}\left[\varphi_j(x)-g(x)\right]\left(\langle \nabla \psi_j(T_\e x), A_\e x\rangle +\psi_j(T_\e x)\tr(A_\e)\right)dx\right|\nonumber\\
    &\leq 2\|g\|_\infty\left(d\sqrt{j}\max_{x\in K_g}|A_\e x|+2\|f\|_\infty O(\e)\right)\vol_n(M_j(g))\nonumber\\
    &=2\|g\|_\infty\left(d\sqrt{j}\max_{x\in K_g}|A_\e x|+2\|f\|_\infty O(\e)\right)\left[1-\left(1-\frac{1}{j}\right)^n\right]\vol_n(K_g)\nonumber\\
    &\leq 2\|g\|_\infty\left(d\sqrt{j}\max_{x\in K_g}|A_\e x|+2\|f\|_\infty O(\e)\right)\cdot\frac{n}{j}\vol_n(K_g).
\end{align}

It remains to estimate $I_3(j,\e)$. We have 
\begin{align}\label{I3-est-prop}
    |I_3(j,\e)| &\leq \int_{T_\e^{-1}M_j(f)\cap M_j(g)}\big|\langle\nabla g(x),A_\e x\rangle\left(\psi_j(T_\e x)-f(T_\e x)\right)\big|\,dx \nonumber\\
    &\leq\left(\int_{T_\e^{-1}M_j(f)\cap M_j(g)}\langle\nabla g(x),A_\e x\rangle^2\,dx\right)^{\frac{1}{2}}
    \left(\int_{T_\e^{-1} M_j(f)\cap M_j(g)}\left(\psi_j(T_\e x)-f(T_\e x)\right)^2\right)^{\frac{1}{2}} \nonumber\\
    &\leq 2\|f\|_\infty \left(\max_{x\in K_g}|A_\e x|^2\right)^{\frac{1}{2}}\left(\int_{K_g}|\nabla g(x)|^2\,dx\right)^{\frac{1}{2}}\vol_n(M_j(g))\nonumber\\
    &\leq 2\|f\|_\infty \left(\max_{x\in K_g}|A_\e x|^2\right)^{\frac{1}{2}}\left(\int_{K_g}|\nabla g(x)|^2\,dx\right)^{\frac{1}{2}}\frac{n}{j}\vol_n(K_g).
\end{align}

Finally, applying the triangle inequality in \eqref{mainstep-prop} and then substituting the estimates \eqref{J1-est}, \eqref{I2-est-prop} and \eqref{I3-est-prop}, we finally obtain that for all $\e\in(0,\e_0]$,
\begin{align*}
&\bigg|\int_{\R^n}\langle\nabla \varphi_j(x), A_\e x\rangle\psi_j(T_\e x)\,dx-\\
     &\left(\int_{T_\e^{-1}K_f\cap \intt(K_g)}\langle \nabla g(x),A_\e x\rangle f(T_\e x)\,dx-\int_{T_\e^{-1}K_f\cap\partial K_g}f(T_\e x)g(x)\langle n_{K_g}(x),A_\e x\rangle\,d\sigma_{\partial K_g}(x)\right)\bigg|\\
     &\leq |J_1(j,\e)|+|I_2(j,\e)|+|I_3(j,\e)|\\
     &\leq 2\|f\|_\infty\|g\|_\infty\max_{x\in K_g}|A_\e x|\xi_F(j,\e)+2\|g\|_\infty\left(d\sqrt{j}\max_{x\in K_g}|A_\e x|+2\|f\|_\infty O(\e)\right)\cdot\frac{n}{j}\vol_n(K_g)\\
     &+2\|f\|_\infty \left(\max_{x\in K_g}|A_\e x|^2\right)^{\frac{1}{2}}\left(\int_{K_g}|\nabla g(x)|^2\,dx\right)^{\frac{1}{2}}\frac{n}{j}\vol_n(K_g)\\
     &\longrightarrow 0\quad \text{as}\quad j\to\infty.
\end{align*}
In the last step, we used the assumption that $\int_{K_g}|\nabla g(x)|^2\,dx<\infty$.
    \end{proof}
\noindent This concludes the proof of Theorem \ref{t:vari}.
\end{proof}

Both Theorem~\ref{t:exitence} and Theorem~\ref{t:vari} hold in a more general setting, with the same proofs.  We record this version of Theorem~\ref{t:vari} here (compare with \cite[Theorem 6.6]{AP-2022}). 

\begin{thm} Let $f,g \colon \R^n \to [0,\infty)$ be compactly supported functions (each support having piecewise smooth boundary) that are of differentiability class $C^1$ on the interior of their supports. Given $T \in M_{n}(\R)$ and $\e>0$, set $T_{\e}:=\frac{I_n+\e T}{\det(I_n+\e T)^{1/n}}$ and consider the function $F \colon [0,\infty) \to \R$ defined by 
\[
F(\e) := \int_{K_f \cap (T_{\e}K_g)} f(x) g(T_{\e}^{-1}x )\, dx. 
\] 
Assume  that $\vol_{n-1}(\partial K_f\cap\partial K_g)=0$,
$\vol_{n-1}(\partial (T_{\e}^{-1} K_f) \cap\partial K_g)=0$ for all but finitely many $\e >0$, and $\int_{K_g}|\nabla g(x)|^2\,dx<\infty$. Then
\begin{align*}
\frac{dF(\varepsilon)}{d\varepsilon}&=\int_{(T_{\e}^{-1} K_f) \cap \intt(K_g)}f(T_\e x) \langle \nabla g(x), A_\e x \rangle \,dx\\ 
&- \int_{(T_{\e}^{-1} K_f) \cap \partial K_g} g(x) f(T_\e x) \langle n_{K_g}(x), A_\e x \rangle\, d\sigma_{\partial K_g} (x),
    \end{align*}
whenever $0 < \e \leq \e_0$, where $A_\e := \left(\frac{\tr(T)}{n}I_n-T\right)T_\e +O(\e)T_\e$. 
\end{thm}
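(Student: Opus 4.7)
The plan is to run exactly the same argument as in the proof of Theorem~\ref{t:vari}, since a careful inspection reveals that log-concavity of $f$ and $g$ is not invoked anywhere in that proof beyond what follows from $C^1$ regularity on the interior of the supports, piecewise smoothness of the boundaries, and the $L^2$ gradient bound on $g$. Concretely, I would reduce the statement to the uniform-convergence Proposition~\ref{p:prop1} applied to suitable approximants of $g\mathbbm{1}_{K_g}$ and $f\mathbbm{1}_{K_f}$, and verify that each step of the chain \eqref{e:dct}--\eqref{e:unifconvg} goes through verbatim in the present setting.

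The first task is to replace the approximants $\varphi_j$ and $\psi_j$ of Theorem~\ref{t:vari}, which were defined via the gauge function $\|\cdot\|_{K_g}$ (and hence only worked because $K_g$ was convex). Since $K_f$ and $K_g$ are now only assumed to have piecewise smooth boundaries, I would use the signed distance function $d_A(x) := \mathrm{dist}(x,\partial A)$ (taken positive in the interior and zero outside $A$), and define
\[
\varphi_j(x) := g(x)\,\chi\!\left(j\, d_{K_g}(x)\right), \qquad \psi_j(x) := f(x)\,\chi\!\left(\sqrt{j}\, d_{K_f}(x)\right),
\]
where $\chi:\R \to [0,1]$ is a smooth cutoff that vanishes on $(-\infty,0]$ and equals $1$ on $[1,\infty)$. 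On the collar $\{0 \le d_{K_g} \le 1/j\}$ the gradient satisfies $|\nabla\varphi_j| \le C\,j$ (because $d_{K_g}$ is $1$-Lipschitz and $f,g$ are bounded with bounded gradients in the collar of a piecewise smooth boundary, up to the lower-dimensional singular set, which has measure zero and can be absorbed), and analogously $|\nabla\psi_j|\le C\sqrt{j}$. These approximants satisfy $\supp\varphi_j \subset K_g$, $\supp\psi_j \subset K_f$, and they converge pointwise a.e.\ to $g\mathbbm{1}_{K_g}$ and $f\mathbbm{1}_{K_f}$.

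With this modification in hand, the four identities \eqref{e:dct}--\eqref{e:unifconvg} are established exactly as in Theorem~\ref{t:vari}. Identity \eqref{e:dct} uses dominated convergence (the integrands are uniformly bounded by $\|f\|_\infty\|g\|_\infty\mathbbm{1}_{K_f}$, which is integrable since $K_f$ is compact). The computation \eqref{e:derivative} of the $\e$-derivative proceeds from the expansion $T_\e^{-1} = I_n + \e(\tfrac{\tr(T)}{n}I_n - T) + O(\e^2)$ together with the chain rule, which is available for each $C^1$ approximant. The uniform convergence \eqref{e:commutation}--\eqref{e:unifconvg} is the content of Proposition~\ref{p:prop1}: the integration-by-parts computation \eqref{algebra} is performed on the domain where $\varphi_j$ and $\psi_j$ are $C^1$, the boundary term splits across $\partial K_g$ and $((j-1)/j)\partial K_g$ as before, and the three residual integrals $J_1(j,\e)$, $I_2(j,\e)$, and $I_3(j,\e)$ are bounded by exactly the same Cauchy--Schwarz estimates \eqref{J1-est}, \eqref{I2-est-prop}, \eqref{I3-est-prop}. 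For $I_2(j,\e)$, the bound $\vol_n(M_j(g)) \le C\,\vol_n(K_g)/j$ continues to hold: it is a consequence of the piecewise smoothness of $\partial K_g$ and the coarea formula (the outer Minkowski content of $\partial K_g$ is finite), not of convexity.

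The part I expect will need the most care is the estimate $\vol_n(M_j(g)) = O(1/j)$ for the collar, since the original proof used the scaling $M_j(g) = K_g \setminus (1-1/j)K_g$ which crucially exploited convexity. In the present generality, I would replace this with the Minkowski-content style bound $\vol_n(\{0 \le d_{K_g} \le 1/j\}) \le (\mathcal{H}^{n-1}(\partial K_g) + o(1))/j$, valid for compact sets with piecewise smooth boundary; this is enough to drive the $I_2$ and $I_3$ estimates to zero. Once these bounds are in place, the exact same chain of inequalities shows the integral in \eqref{mainstep-prop} tends to zero uniformly in $\e \in (0,\e_0]$, where $\e_0$ is chosen so that $\vol_{n-1}(\partial(T_\e^{-1}K_f) \cap \partial K_g) = 0$; this yields Proposition~\ref{p:prop1}, hence the variational formula. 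Everything else transfers mutatis mutandis.
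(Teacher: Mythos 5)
Your proposal follows the same route the paper takes: it reduces the statement to the argument of Theorem~\ref{t:vari} (which the paper's remark dismisses as ``the same proof''). The value you add is in spelling out exactly where the ``same proof'' claim needs repair once $K_f$ and $K_g$ are no longer assumed convex: you correctly observe that the gauge-function collars $M_j(g)=\{1-\tfrac1j\le\|x\|_{K_g}\le1\}$ are meaningless for non-star-shaped sets and must be replaced by distance-function collars $\{0\le d(x,\partial K_g)\le 1/j\}\cap K_g$, and that the volume control $\vol_n(M_j(g))\le\tfrac{n}{j}\vol_n(K_g)$, which the paper obtains from the homothety $(1-1/j)K_g\subset K_g$, must be replaced by a Minkowski-content bound, which piecewise smooth boundaries do supply. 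Both observations are correct and are the genuine content of the extension. The one place you should tighten up is the parenthetical justification that $|\nabla\varphi_j|\le Cj$ and $|\nabla\psi_j|\le C\sqrt j$ follow because ``$f,g$ are bounded with bounded gradients in the collar'': this is false in general, since the hypotheses only give $\nabla g\in L^2(K_g)$ and $C^1$ regularity of $f$ on the open interior, so $\nabla f$ and $\nabla g$ may blow up as one approaches $\partial K_f$, $\partial K_g$. With your multiplicative cutoff $\psi_j=f\cdot\chi(\sqrt j\,d_{K_f})$ the gradient picks up an unattenuated $\chi\nabla f$ term, which is not $O(\sqrt j)$. The fix is the same one the paper implicitly needs: do not simply multiply $f$ by a cutoff; instead replace $f$ on the collar by a bounded-gradient interpolant (restrict $f$ to the inner level set, extend continuously outward, then mollify at scale $1/\sqrt j$), as the paper's piecewise definition of $\psi_j$ via the abstract $f_j$ is meant to encode. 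With that correction your approximants satisfy the stated gradient bounds and the subsequent Cauchy--Schwarz estimates \eqref{J1-est}, \eqref{I2-est-prop}, \eqref{I3-est-prop} transfer verbatim.
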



\subsection{The general case}

Now we move to the general setting of Theorem~\ref{t:variationalformula}, where the functions considered may have unbounded supports. We require the following lemma. 

\begin{lemma}\label{l:integral} For every $\varphi\in\mathcal{L}^n$, every convex set $M \subset \R^n$, and all $A,T \in {\rm SL}_n(\R)$, we have
\begin{align*}
&\Gamma(n) \int_{\partial (TM)} \varphi(x) \langle n_{TM}(x), A x \rangle\, d\sigma_{\partial (TM)}(x) \\
&= \int_{\R^n} \left|\nabla e^{-\|x\|_{TM}}\right| \varphi\left(\frac{x}{\|x\|_{TM}}\right)\left\langle n_{ TM}\left(\frac{x}{\|x\|_{TM}}\right), A \frac{x}{\|x\|_{TM}} \right\rangle dx.
\end{align*}
    
\end{lemma}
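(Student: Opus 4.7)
The plan is to reduce the right-hand side to the left-hand side by switching to polar coordinates adapted to the convex set $TM$. First I would write a nonzero $x \in \R^n$ uniquely as $x = r u$ with $r = \|x\|_{TM}$ and $u = x/\|x\|_{TM} \in \partial(TM)$, and invoke the standard Jacobian formula
\[
dx = r^{n-1}\langle u, n_{TM}(u)\rangle\, dr\, d\sigma_{\partial(TM)}(u),
\]
which follows from parameterizing $(r,u)\mapsto ru$ against a local chart of $\partial(TM)$ and observing that only the normal component of $u$ contributes to the determinant, while the tangential part is killed against the $n-1$ tangent vectors to $\partial(TM)$.

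Second, I would establish the pointwise identity $|\nabla e^{-\|x\|_{TM}}|_{x=ru} = e^{-r}/\langle u, n_{TM}(u)\rangle$. The gauge $\|\cdot\|_{TM}$ is positively $1$-homogeneous and convex, so it is differentiable a.e.\ with $0$-homogeneous gradient; moreover the gradient is parallel to $n_{TM}(u)$ at smooth boundary points because the level set $\{\|\cdot\|_{TM}=c\}$ is the dilate $c\,\partial(TM)$. Euler's identity $\langle x, \nabla \|x\|_{TM}\rangle = \|x\|_{TM}$ then pins the scalar down, yielding $\nabla \|x\|_{TM} = n_{TM}(u)/\langle u,n_{TM}(u)\rangle$ and the claimed magnitude after multiplying by $e^{-r}$.

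Third, I would substitute both facts into the right-hand side. The key feature of the identity is a clean cancellation: the factor $\langle u, n_{TM}(u)\rangle$ from the Jacobian cancels the same factor in the denominator coming from $|\nabla e^{-\|\cdot\|_{TM}}|$, and since $x/\|x\|_{TM} = u$ the integrand collapses to $r^{n-1}e^{-r}\,\varphi(u)\,\langle n_{TM}(u), Au\rangle$. Fubini then separates the variables, and the radial integral $\int_0^{\infty} r^{n-1}e^{-r}\,dr = \Gamma(n)$ matches the factor on the left-hand side, concluding the computation.

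The main technical point I expect to address is regularity: for a general convex set $M$ the gauge $\|\cdot\|_{TM}$ is only Lipschitz and $\partial(TM)$ is smooth only off a surface-measure-zero set, so both the Jacobian formula and the gradient identity above hold only almost everywhere. This is sufficient for an identity between integrals, but to keep things clean I would first carry the argument out assuming $M$ is smooth and strictly convex (where the Gauss map is a diffeomorphism on $\partial(TM)$) and then pass to a limit via standard approximation of convex bodies. I would also need the implicit hypothesis that the origin lies in $\intt(TM)$ so that $\|\cdot\|_{TM}$ is a genuine finite-valued gauge on $\R^n\setminus\{o\}$.
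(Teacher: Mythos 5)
Your proof is correct and uses essentially the same approach as the paper: a polar decomposition adapted to the gauge $\|\cdot\|_{TM}$, with the radial integral $\int_0^\infty r^{n-1}e^{-r}\,dr$ producing the factor $\Gamma(n)$. The paper runs the computation left-to-right by expanding $\Gamma(n)$ as an integral, rescaling onto $\partial(sTM)$, and invoking the co-area formula, while you run it right-to-left via the explicit polar Jacobian $dx = r^{n-1}\langle u, n_{TM}(u)\rangle\,dr\,d\sigma_{\partial(TM)}(u)$, but the key cancellation between that Jacobian factor and $|\nabla\|x\|_{TM}| = 1/\langle u, n_{TM}(u)\rangle$ is identical in both.
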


\begin{proof}
By the definition of the gamma function, we may write
\begin{align*}
\Gamma(n) \int_{\partial (TM)} \varphi(x) \langle n_{TM}(y), A y \rangle\, d\sigma_{\partial (TM)}(y) &= \int_0^{\infty} \int_{\partial (TM)}  \varphi(y) \langle n_{TM}(y), A y \rangle  e^{-s}\mathbb{S}^{n-1}\, dy\, ds\\
&= \int_0^{\infty} \int_{\partial (sTM)} e^{-s} \varphi\left(\frac{x}{s}\right) \left\langle n_{sTM}\left(\frac{x}{s}\right), A\frac{x}{s} \right\rangle dx\, ds,
\end{align*}
where we made the change of variable $x=sy$. Now since $x \in \partial (sTM)$, we have $s = \|x\|_{TM}$. Consequently, for each $s >0$,
\[
e^{-s} = e^{-\|x\|_{TM}} = \frac{\left| \nabla e^{-\|x\|_{TM}} \right|}{| \nabla \|x\|_{TM}|}.
\]
Applying the co-area formula, we obtain
\begin{align*}
\Gamma(n)&\int_{\partial (TM)} \varphi(x) \langle n_{TM}(x), A x \rangle\, d\sigma_{\partial (TM)}(x) \\
&=\int_0^{\infty} \int_{\partial (sTM)} e^{-s} \varphi\left(\frac{x}{s}\right) \left\langle n_{sTM}\left(\frac{x}{s}\right), A\frac{x}{s} \right\rangle dx\, ds\\
&=\int_0^{\infty} \int_{\partial (sTM)} \varphi\left(\frac{x}{\|x\|_{TM}}\right) \left\langle n_{TM}\left(\frac{x}{\|x\|_{TM}}\right), A\left(\frac{x}{\|x\|_{TM}}\right) \right\rangle  \frac{\left| \nabla e^{-\|x\|_{TM}} \right|}{| \nabla \|x\|_{TM}}\, dx\, ds\\
&=\int_0^{\infty} \int_{\partial (TM)} \varphi\left(\frac{sy}{\|sy\|_{TM}}\right) \left\langle n_{TM}\left(\frac{sy}{\|sy\|_{TM}}\right), A\left(\frac{sy}{\|sy\|_{TM}}\right) \right\rangle  \frac{\left| \nabla e^{-\|sy\|_{TM}} \right|}{| \nabla \|sy\|_{TM}|}\mathbb{S}^{n-1} \,dy\, ds\\
&=\int_{\R^n} \varphi\left(\frac{z}{\|z\|_{TM}}\right) \left\langle n_{TM}\left(\frac{z}{\|z\|_{TM}}\right), A\left(\frac{z}{\|z\|_{TM}}\right) \right\rangle  \left| \nabla e^{-\|z\|_{TM}} \right| dz.
\end{align*}
In the last line, we made use of the polar decomposition $z = sy$ where $s > 0$ and $y \in \partial K_s(f)$, so that 
\[
dz = \frac{\mathbb{S}^{n-1}}{|\nabla \|sy\|_{TM}|}\,dy\,ds,
\]
as required. 
\end{proof}


\begin{proof}[Proof of Theorem~\ref{t:variationalformula}] 

Since $f$ and $g$ are log-concave, so too is $fg$. Hence, there exists $x_0 \in \R^n$ such that $\|fg\|_{\infty}=f(o)g(o)$.  For each $\delta > 0$, consider the following truncations of $f$ and $g$ (see \cite{AKSW}):
\[
f_{\delta}(x) := f(x) e^{-\delta |x|^2} \mathbbm{1}_{\{f > \delta \}}(x) \quad \text{ and } \quad g_{\delta}(x) := g(x) e^{-\delta |x|^2} \mathbbm{1}_{\{g > \delta \}}(x).
\]
For each $\delta >0$, we set $K_f^\delta := \supp(f_\delta)$ and $K_g^\delta := \supp(g_{\delta})$. Note that the following conditions may be verified: 
\begin{itemize}
    \item[(i)] For each fixed $x \in \R^n$, $f_{\delta}(x) \uparrow f(x)$ and $g_{\delta}(x) \uparrow g(x)$ as $\delta \to 0$. Moreover, $K_f^\delta \uparrow K_f$ and $K_g^\delta \uparrow K_g$ as $\delta \to 0$.
    
    \item[(ii)] For every $\delta >0$, $f_{\delta}$ and  $g_\delta $ are log-concave functions. Thus,  $f_\delta\in C^1(\intt(K_f^\delta))$, $g_\delta \in C^1(\intt(K_g^\delta))$ and $x_0 \in \intt(K_f^\delta \cap K_g^\delta)$, so we may assume that $o \in \intt(K_f^\delta \cap K_g^\delta)$ for all $\delta >0$. 
    
    \item[(iii)] There exists some sufficiently small $\delta_0 >0$  such that for every  $\delta \leq \delta_0$, we have 
    \[
    \vol_{n-1}(\partial K_f^\delta \cap \partial K_g^{\delta}) = 0, \vol_{n-1}(\partial K_f^\delta \cap K_g^\delta) > 0, \text{ and } \vol_{n-1}(\partial K_f^\delta \cap \partial (T_\e K_g^\delta)) =0
    \]
    for all but finitely many $\varepsilon> 0$.

    \item[(iv)] For every $\delta>0$, we have $\int_{K_g^\delta}|\nabla g_\delta(x)|^2\,dx<\infty$.
\end{itemize}




We will only prove (iv). Observe that  
\begin{equation}\label{gradient-g-delta}
\nabla g_\delta(x)=(\nabla g(x)e^{-\delta|x|^2}-2\delta g(x)e^{-\delta|x|^2}x)\mathbbm{1}_{K_g^\delta}(x).
\end{equation}
Hence, 
\begin{align*}
    &\int_{K_g^\delta}|\nabla g_\delta(x)|^2\,dx\\ =&\int_{K_g^\delta}\bigg(|\nabla g(x)|^2 e^{-2\delta|x|^2}-4\delta g(x) e^{-2\delta|x|^2}\left\langle \nabla g(x),  x\right\rangle +4\delta^2 g(x)^2 e^{-2\delta|x|^2}|x|^2\bigg)\,dx\\
    &\leq \int_{K_g^\delta}|\nabla g(x)|^2 e^{-2\delta|x|^2}\,dx+4\delta\|g\|_\infty \int_{K_g^\delta}e^{-2\delta|x|^2}|\langle\nabla g(x),x\rangle|\,dx+4\delta^2\|g\|_\infty^2\int_{K_g^\delta}|x|^2 e^{-2\delta|x|^2}\,dx\\
    &\leq \int_{K_g}|\nabla g(x)|^2\,dx+4\delta\|g\|_\infty\left(\int_{K_g}|\nabla g(x)|^2\,dx\right)^{\frac{1}{2}}\left(\int_{K_g}|x|^2 e^{-2\delta|x|^2}\,dx\right)^{\frac{1}{2}}\\
    &+4\delta^2\|g\|_\infty^2\int_{K_g}|x|^2 e^{-2\delta|x|^2}\,dx.
\end{align*}
By assumption, $\int_{K_g}|\nabla g(x)|^2\,dx<\infty$, and we always have  $\int_{K_g}|x|^2 e^{-2\delta|x|^2}\,dx<\infty$. Thus, (iv) is proved.

\vspace{2mm}

Consider the function $F_\delta(\e) := \int_{K_f^\delta \cap (T_\e K_g^\delta)} f_\delta(x)g_\delta(T_\e^{-1}x) \,dx$. As in the proof of Theorem~\ref{t:vari},  for  $\e>0$ we define $A_\e := \left(\frac{\tr(T)}{n}I_n-T\right)T_\e +O(\e)T_\e$. In view of  properties (ii)-(iv), for each $\delta >0$  and $0 < \e \leq \e_0$ sufficiently small, we may apply Theorem~\ref{t:vari} to $f_\delta$ and $g_\delta$ to obtain 
\begin{align*}
\frac{dF_\delta(\varepsilon)}{d\varepsilon}&=\int_{(T_\e^{-1} K_f^\delta) \cap \intt(K_g^\delta)}f_\delta(T_\e x) \langle \nabla g_\delta(x), A_\e x \rangle \,dx\\
&- \int_{(T_\e^{-1} K_f^\delta) \cap \partial K_g^\delta} f_\delta(T_\e x)g_\delta(x) \langle n_{K_g^\delta}(x), A_\e x \rangle\, d\sigma_{\partial K_g^\delta} (x).
    \end{align*}
Thus, for $\delta >0$ and $0 < \e \leq \e_0$, 
\begin{align*}
&\left|\frac{dF(\e)}{d\e}-\frac{dF_\delta(\e)}{d\e}\right|=\\
&=\left|\int_{\intt(T_\e^{-1}K_f) \cap K_g}f(T_\e\cdot) \left\langle \nabla g, A_\e \right\rangle dx -\int_{(T_\e^{-1} K_f^\delta) \cap \intt(K_g^\delta)}f_\delta(T_\e \cdot) \left\langle \nabla g_\delta, A_{\e} \right\rangle dx \right.\\
&-\left.\int_{ (T_\e^{-1}K_f) \cap \partial K_g}f(T_\e\cdot)g \langle n_{K_g},A_\e \rangle d\sigma_{\partial K_g}\right. \left.+  \int_{(T_\e^{-1} K_f^\delta) \cap \partial K_g^\delta}f_\delta(T_\e \cdot) g_\delta \langle n_{K_g^\delta},A_\e\rangle d\sigma_{\partial K_g^{\delta}} \right|\\
&\leq E_1(\delta,\e) + E_2(\delta, \e),\\
\end{align*}
where 
\[
E_1(\delta,\e):=\left|\int_{\intt(T_\e^{-1}K_f) \cap K_g}f(T_\e\cdot) \left\langle \nabla g, A_\e \right\rangle dx -\int_{(T_\e^{-1} K_f^\delta) \cap \intt(K_g^\delta)}f_\delta(T_\e \cdot) \left\langle \nabla g_\delta, A_{\e} \right\rangle dx  \right|
\]
and 
\begin{align*}
E_2(\delta, \e) &:= \left|\int_{ (T_\e^{-1}K_f) \cap \partial K_g}f(T_\e\cdot)g \langle n_{K_g},A_\e \rangle d\sigma_{\partial K_g}\right. \left.-  \int_{(T_\e^{-1} K_f^\delta) \cap \partial K_g^\delta}f_\delta(T_\e \cdot) g_\delta \langle n_{K_g^\delta},A_\e\rangle d\sigma_{\partial K_g^{\delta}} \right|.
\end{align*}

Therefore, using \eqref{gradient-g-delta} we get 
\begin{align}\label{unbounded-case-1}
    E_1(\delta,\e) &= \int_{\intt(T_\e^{-1}K_f) \cap K_g}f(T_\e x) \left\langle \nabla g(x), A_\e x \right\rangle dx -\int_{(T_\e^{-1}K_f^\delta)\cap \intt( K_g^\delta)}f(T_\e x)\langle\nabla g(x),A_\e x\rangle\,dx \nonumber\\
&+\int_{(T_\e^{-1}K_f^\delta)\cap \intt( K_g^\delta)}f(T_\e x)\langle\nabla g(x),A_\e x\rangle\,dx
-\int_{(T_\e^{-1} K_f^\delta) \cap \intt(K_g^\delta)}f_\delta(T_\e x) \left\langle \nabla g_\delta(x), A_{\e}x \right\rangle dx \nonumber\\
&=\int_{\Delta_\delta(\e)}f(T_\e x)\langle\nabla g(x),A_\e x\rangle\,dx \nonumber\\
&+\int_{(T_\e^{-1} K_f^\delta) \cap \intt(K_g^\delta)}\left(f(T_\e x)-f_\delta(T_\e x)e^{-\delta|x|^2}\right) \left\langle \nabla g(x), A_{\e}x \right\rangle dx \nonumber\\
&+2\delta\int_{(T_\e^{-1}K_f^\delta)\cap \intt( K_g^\delta)}f_\delta(T_\e x)g(x)e^{-\delta|x|^2}\langle x,A_\e x\rangle \,dx
\end{align}
where $\Delta_\delta(\e):= [(T_\e^{-1}K_f) \cap \intt(K_g)] \setminus [(T_\e^{-1} K_f^\delta) \cap \intt(K_g^\delta)]$.  By the Cauchy-Schwarz inequality, we obtain the estimate 
\begin{align*}
    \left|\int_{\Delta_\delta(\e)}f(T_\e x)\langle\nabla g(x),A_\e x\rangle\,dx\right|
    &\leq \int_{\Delta_\delta(\e)}|f(T_\e x)|\cdot|\langle\nabla g(x),A_\e x\rangle|\,dx\\
    &\leq\left(\int_{\Delta_\delta(\e)}f(T_\e x)^2\,dx\right)^{\frac{1}{2}}\left(\int_{\Delta_\delta(\e)}\langle\nabla g(x),A_\e x\rangle^2\,dx\right)^{\frac{1}{2}}\\
    &\leq \|f\|_\infty \left(\max_{x\in \Delta_\delta(\e)}|A_\e x|^2\right)^{\frac{1}{2}}\left(\int_{\Delta_\delta(\e)}|\nabla g(x)|^2\right)^{\frac{1}{2}}\vol_n(\Delta_\delta(\e))^{\frac{1}{2}}.
\end{align*}
By hypothesis, $\int_{K_g}|\nabla g|^2\,dx$ is bounded.  Because of the second part of property (i), for every fixed $\e>0$ we have that $\Delta_\delta(\e) \to \varnothing$ as $\delta \to 0$, which implies 
\[
\left|\int_{\Delta_\delta(\e)}f(T_\e x)\langle\nabla g(x),A_\e x\rangle\,dx\right| \to 0 \text{ as } \delta \to 0.
\]
Furthermore, by the monotone convergence theorem,
\[
\int_{(T_\e^{-1} K_f^\delta) \cap \intt(K_g^\delta)}\left(f(T_\e x)-f_\delta(T_\e x)e^{-\delta|x|^2}\right) \left\langle \nabla g(x), A_{\e}x \right\rangle dx \to 0\quad \text{as}\quad \delta\to 0.
\]

Thus, going back to \eqref{unbounded-case-1} we have
\begin{align*}
    |E_1(\delta,\e)| &\leq \left|\int_{\Delta_\delta(\e)}f(T_\e x)\langle\nabla g(x),A_\e x\rangle\,dx\right| \\
&+\left|\int_{(T_\e^{-1} K_f^\delta) \cap \intt(K_g^\delta)}\left(f(T_\e x)-f_\delta(T_\e x)e^{-\delta|x|^2}\right) \left\langle \nabla g(x), A_{\e}x \right\rangle dx\right| \\
&+2\delta\int_{(T_\e^{-1}K_f^\delta)\cap \intt( K_g^\delta)}f_\delta(T_\e x)g(x)e^{-\delta|x|^2}|\langle x,A_\e x\rangle| \,dx\\
&\leq\left|\int_{\Delta_\delta(\e)}f(T_\e x)\langle\nabla g(x),A_\e x\rangle\,dx\right| \\
&+\left|\int_{(T_\e^{-1} K_f^\delta) \cap \intt(K_g^\delta)}\left(f(T_\e x)-f_\delta(T_\e x)e^{-\delta|x|^2}\right) \left\langle \nabla g(x), A_{\e}x \right\rangle dx\right| \\
&+2\delta\|f\|_\infty\|g\|_\infty\max_{x\in K_g}|A_\e x|\int_{(T_\e^{-1}K_f^\delta)\cap \intt( K_g^\delta)}|x|e^{-\delta|x|^2}\,dx\\
&\longrightarrow 0 \quad \text{as}\quad \delta\to 0.
\end{align*}
Therefore, $E_1(\delta,\e) \to 0$ as $\delta \to 0$ for every $\e \in [0, \e_0]$. 

Next, we consider $E_2(\delta,\e)$. For each $\e>0$, we set $\varphi_\e (x) := f(T_\e x)g(x)\mathbbm{1}_{K_g}(x)$, and for each $\delta >0$ we  set $\varphi_{\delta,\e}(x) := f_\delta(T_\e x) g_\delta(x)\mathbbm{1}_{K_g^\delta}(x)$. Applying  Lemma~\ref{l:integral} to each of the integrals comprising $E_2(\delta,\e)$, and then integrating in polar coordinates over $S=S(f,g):= \mathbb{S}^{n-1} + x_0$, we may write 
\begin{equation}
\begin{split}
E_2(\delta,\e) &= \frac{1}{\Gamma(n)}\left|\int_{\R^n} \varphi_\e\left(\frac{x}{\|x\|_{K_g}}\right) \left\langle n_{K_g}\left(\frac{x}{\|x\|_{K_g}}\right), A_\e\frac{x}{\|x\|_{K_g}} \right\rangle \left|\nabla e^{-\|x\|_{K_g}}\right| dx \right.\\
&\left. - \int_{\R^n} \varphi_{\delta,\e }\left(\frac{x}{\|x\|_{K_g^\delta}}\right) \left\langle n_{K_g^{\delta}}\left(\frac{x}{\|x\|_{K_g^\delta}}\right), A_\e\frac{x}{\|x\|_{K_g^\delta}} \right\rangle \left|\nabla e^{-\|x\|_{K_g^\delta}}\right| dx \right|\\
&=\frac{\Gamma(n+1)}{\Gamma(n)}\left|\int_{S} \varphi_\e\left(\frac{u}{\|u\|_{K_g}}\right) \left\langle n_{K_g}\left(\frac{u}{\|u\|_{K_g}}\right), A_\e\frac{u}{\|u\|_{K_g}} \right\rangle \|u\|_{K_g}^{-n} |\nabla \|u\|_{K_g}|\, du \right.\\
&\left. - \int_{S} \varphi_{\delta,\e}\left(\frac{u}{\|u\|_{K_g^\delta}}\right) \left\langle n_{K_g^\delta}\left(\frac{u}{\|u\|_{K_g^\delta}}\right), A_\e\frac{u}{\|u\|_{K_g^\delta}} \right\rangle \|u\|_{K_g^\delta}^{-n} |\nabla \|u\|_{K_g^\delta}| \,du \right|.
\end{split}
\end{equation}
This can be estimated from above by
\begin{equation}\label{e:estimate1}
\begin{split}
E_2(\delta,\e)&\leq n \max_{u \in S} \left\{\left|\left\langle n_{K_g}\left(\frac{u}{\|u\|_{K_g}}\right), A_\e\frac{u}{\|u\|_{K_g}} \right\rangle \right| |\nabla \|u\|_{K_g}|\right\}\\
&\times \int_{S}   \left|\varphi_\e\left(\frac{u}{\|u\|_{K_g}}\right)-\varphi_{\delta,\e}\left(\frac{u}{\|u\|_{K_g^\delta}}\right)\right|du\\
&+n \|\varphi_\e\|_{\infty} \int_{S} \left|\left\langle n_{K_g}\left(\frac{u}{\|u\|_{K_g}}\right), A\frac{u}{\|u\|_{K_g}} \right\rangle \|u\|_{K_g}^{-n} |\nabla \|u\|_{K_g}|\right.\\
&\left.- \left\langle n_{K_g^\delta}\left(\frac{u}{\|u\|_{K_g^\delta}}\right), A\frac{u}{\|u\|_{K_g^\delta}} \right\rangle \|u\|_{K_g^\delta}^{-n} |\nabla \|u\|_{K_g^\delta}| \right| du.
\end{split}
\end{equation}

Property (i) implies that $K_g^\delta \uparrow K_g$ as $\delta \to 0$, thereby implying that $\frac{u}{\|u\|_{K_g^\delta}} \to \frac{u}{\|u\|_{K_g}}$  as $\delta \to 0$ whenever $u \in S$. This, together with the fact that $\varphi_{\delta, \e}(u) \uparrow  \varphi_\e$, implies that  $\varphi_{\delta,\e} \left(\frac{u}{\|u\|_{K_g^\delta}}\right) \to \varphi_{\e} \left(\frac{u}{\|u\|_{K_g}}\right)$ as $\delta \to 0$ for each $u \in S$.  Thus, the dominated convergence theorem yields that  
\[
\int_{S} \left|\varphi_{\e} \left(\frac{u}{\|u\|_{K_g}}\right)-\varphi_{\delta,\e} \left(\frac{u}{\|u\|_{K_g^\delta}}\right) \right| du \to 0 \quad\text{as}\quad \delta \to 0,
\]
so the penultimate term of \eqref{e:estimate1} tends to zero as $\delta \to 0$.  Therefore, it remains to estimate the last integral appearing in equation \eqref{e:estimate1}. We have
\begin{equation}\label{e:estimate2}
\begin{split}
&\int_{S} \left|\left\langle n_{K_g}\left(\frac{u}{\|u\|_{K_g}}\right), A_\e\frac{u}{\|u\|_{K_g}} \right\rangle \|u\|_{K_g}^{-n} |\nabla \|u\|_{K_g}|\right.
\left.\right.\\
&\left.- \left\langle n_{K_g^\delta}\left(\frac{u}{\|u\|_{K_g^\delta}}\right), A_\e\frac{u}{\|u\|_{K_g^\delta}} \right\rangle \|u\|_{K_g^\delta}^{-n} |\nabla \|u\|_{K_g^\delta}| \right| du\\
&\leq\underbrace{\int_{S} \left| \left\langle n_{K_g}\left(\frac{u}{\|u\|_{K_g}}\right), A_\e\frac{u}{\|u\|_{K_g}} \right\rangle \right| \cdot\left|\|u\|_{K_g}^{-n} \cdot |\nabla\|u\|_{K_g}| - \|u\|_{K_g^\delta}^{-n} \cdot |\nabla \|u\|_{K_g^\delta}|\right|du}_{=:L_1(\delta,\e)} \\
&+ \underbrace{\int_{S} \|u\|_{K_g^\delta}^{-n} \big|\nabla \|u\|_{K_g^\delta}\big| \left|\left\langle n_{K_g}\left(\frac{u}{\|u\|_{K_g}}\right) - n_{K_g^\delta}\left(\frac{u}{\|u\|_{K_g^\delta}}\right), A_\e\left(\frac{u}{\|u\|_{K_g}} - \frac{u}{\|u\|_{K_g^\delta}}\right) \right\rangle  \right| du}_{=:L_2(\delta,\e)}.\\
\end{split}
\end{equation}

We estimate $L_1(\delta,\e)$ and $L_2(\delta,\e)$ individually.  By \cite[Theorem~12.35]{Rockafellar-Wets},  for every $u \in S$ we have $\nabla \|u\|_{K_g^\delta } \to \nabla \|u\|_{K_g}$ as $\delta \to 0$. Using this fact and property (i), by the Cauchy-Schwarz inequality we obtain  
\begin{align*}
L_1(\delta,\e) &\leq \sqrt{ \int_S\left|\|u\|_{K_g}^{-n} \cdot |\nabla\|u\|_{K_g}| - \|u\|_{K_g^\delta}^{-n} \cdot |\nabla \|u\|_{K_g^\delta}|\right|^2 du}\\
&\times \sqrt{\int_{S} \left| \left\langle n_{K_g}\left(\frac{u}{\|u\|_{K_g}}\right), A_\e\frac{u}{\|u\|_{K_g}} \right\rangle \right|^2 du} \to 0
\end{align*}
as $\delta \to 0$. Furthermore, applying the Cauchy-Schwarz inequality again, we derive that 
\begin{align*}
L_2(\delta,\e) &\leq \max_{u \in S}\left|A_\e\left(\frac{u}{\|u\|_{K_g}} - \frac{u}{\|u\|_{K_g^\delta}}\right) \right| \sqrt{\int_{S}\|u\|_{K_g^\delta}^{-n} \cdot |\nabla \|u\|_{K_g^\delta}|^2\,du} \\
&\times \sqrt{ \int_{S} \left| n_{K_g}\left(\frac{u}{\|u\|_{K_g}}\right) - n_{K_g^\delta}\left(\frac{u}{\|u\|_{K_g^\delta}}\right)  \right|^2 du} \to 0
\end{align*}
as $\delta\to 0$.

In view of the inequalities \eqref{e:estimate1} and \eqref{e:estimate2}, we see that $E_2(\delta,\e) \to 0$ as $\delta \to 0$ whenever $\e \in [0,\e_0]$. Consequently, for every $\e \in [0,\e_0]$,
\begin{align*}
\frac{d F_\delta(\e)}{d \e} \to &\int_{\intt(T_\e^{-1}K_f) \cap K_g} f(T_\e x) \langle \nabla g(x) ,A_\e x\rangle\, dx\\
&-\int_{(T_\e^{-1}K_f) \cap  \partial K_g} f(T_\e x) g(x) \langle n_{K_g}(x), A_\e x\rangle\, d\sigma_{\partial K_g}(x)
\end{align*}
as $\delta \to 0$.
Since this convergence is uniform,  by \cite[Theorem~14.2.7]{Taobook} we deduce that 
\begin{align*}
\frac{dF(\e)}{d\e} &= \int_{(T_{\e}^{-1} K_f) \cap \intt(K_g)} f(T_\e x) \langle \nabla g(x) ,A_\e x\rangle\, dx\\
&-\int_{(T_\e^{-1}K_f) \cap \partial K_g} f(T_\e x) g(x) \langle n_{K_g}(x), A_\e x\rangle\, d\sigma_{\partial K_g}(x)
\end{align*}
whenever $\e \in [0,\e_0]$, as desired. 
\end{proof}


\subsection{Proof of Theorem~\ref{t:shifts}}

In this subsection, we sketch the proof of Theorem~\ref{t:shifts}. 

\begin{proof} 
The proof of Theorem \ref{t:shifts} is nearly  identical to that of Theorem~\ref{t:vari}. Here we only sketch the proof of the case when the supports $K_f$ and $K_g$ are compact sets; the general case then follows from an approximation argument similar to the one in the proof of Theorem~\ref{t:variationalformula}.  Recall that $f$ and $g = e^{-\varphi}$ belong to $\mathcal{L}^n$, and  we assume that the following conditions hold:
\begin{itemize}
    \item $\vol_{n-1}(\partial K_f \cap \partial K_g) = 0$;
    \item $\vol_{n-1}(\partial K_f \cap \partial (TK_g+z)) = 0$ for all but finitely many $(T,z) \in  {\rm SL}_n(\R) \times \R^n$;
    \item $\nabla g$ has bounded second moment, that is, $\int_{K_g}|\nabla g(x)|^2\,dx<\infty$.
\end{itemize}

We again consider the functions $\varphi_j \uparrow g\mathbbm{1}_{K_g}$ and $\psi_j \uparrow f1_{K_f}$ as in the proof of Theorem~\ref{t:vari}. As before, we need to show that the following identities hold  for all sufficiently small $\e>0$: 

\begin{itemize}
    \item[(i)]  $\frac{d}{d\e} \int_{\R^n} \lim_{j \to \infty}\varphi_j(x-\e y)\psi_j(x)\, dx = \frac{d}{d\e}\lim_{j \to \infty} \int_{\R^n} \varphi_j(x-\e y) \psi_j(x) \,dx$;
    
    \item[(ii)] $\frac{d}{d\e}\lim_{j \to \infty} \int_{\R^n} \varphi_j(x-\e y) \psi_j(x)\, dx = \lim_{j \to \infty} \frac{d}{d\e} \int_{\R^n} \varphi_j(x-\e y) \psi_j(x)\, dx$;
    
    \item[(iii)] $\frac{d}{d\e} \int_{\R^n} \varphi_j(x-\e y) \psi_j(x)\,dx = \int_{\R^n} \psi_j(x+\e y) \langle \nabla \varphi_j(x), -y \rangle\, dx$;
    
    \item[(iv)]
    \begin{align*}
    \lim_{j \to \infty}\int_{\R^n} \psi_j(x+\e y) \langle \nabla \varphi_j(x), y \rangle\, dx &= \int_{(K_f-\e y)\cap \intt(K_g)}f(x+\e y) \langle \nabla g(x), y \rangle\, dx\\
    &- \int_{(K_f-\e y) \cap \partial K_g} f(x+\e y)g(x) \langle n_{K_g}(x), y \rangle \,d\sigma_{\partial K_g}(x). 
    \end{align*}
\end{itemize}

\noindent Equation (i) follows from the dominated convergence theorem, and equation  (iii) follows from the chain rule and a change of variables.  To prove  (ii) and (iv), we need to verify the following 

\begin{proposition} \label{c:conv} There exists $\e_1 >0$ such that $\int_{\R^n} \psi_j(x+\e y) \langle \nabla \varphi_j(x), y \rangle\, dx$ converges uniformly to 
\[
\int_{(K_f-\e y)\cap \intt(K_g)}f(x+\e y) \langle \nabla g(x), y \rangle\, dx- \int_{(K_f-\e y) \cap \partial K_g} f(x+\e y)g(x) \langle n_{K_g}(x), y \rangle \,d\sigma_{\partial K_g}(x).
\]
as $j \to \infty$ whenever $0 < \e \leq \e_1$. 
\end{proposition}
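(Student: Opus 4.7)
The argument will parallel that of Proposition~\ref{p:prop1} almost verbatim, with a single structural simplification: the ``direction'' vector here is the constant $y\in\R^n$ rather than the position-dependent $A_\e x$. I would reuse the same cutoff approximations $\varphi_j \uparrow g\mathbbm{1}_{K_g}$ and $\psi_j \uparrow f\mathbbm{1}_{K_f}$, with $|\nabla\varphi_j|<cj$ supported in $M_j(g)=\{1-1/j\le\|x\|_{K_g}\le 1\}$ and $|\nabla\psi_j|<d\sqrt{j}$ supported in $M_j(f)=\{1-1/\sqrt{j}\le\|x\|_{K_f}\le 1\}$.

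First I would form the difference between $\int_{\R^n}\psi_j(x+\e y)\langle\nabla\varphi_j(x),y\rangle\,dx$ and the target right-hand side and, exploiting the support properties of $\nabla(\varphi_j-g)$ and $\psi_j-f$, decompose it (in analogy with \eqref{algebra}) into three pieces via the divergence theorem: a boundary integral on $\partial M_j(g)=\partial K_g\sqcup\bigl(\tfrac{j-1}{j}\bigr)\partial K_g$, on which $\varphi_j-g$ vanishes along the inner component so only the outer contribution survives; an interior term $\int_{M_j(g)}(\varphi_j-g)\,\divv(\psi_j(x+\e y)\,y)\,dx$; and a remainder $\int_{M_j(g)}\langle\nabla g,y\rangle[\psi_j-f](x+\e y)\,dx$. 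A pleasant simplification relative to Proposition~\ref{p:prop1} is that $\divv(\psi_j(x+\e y)\,y)=\langle\nabla\psi_j(x+\e y),y\rangle$ exactly, with no trace-type correction, because $y$ is constant in $x$.

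Next I would estimate the three pieces. After isolating the target boundary integral $-\int_{(K_f-\e y)\cap\partial K_g}f(x+\e y)g(x)\langle n_{K_g}(x),y\rangle\,d\sigma_{\partial K_g}$, the residual boundary term is bounded by $2\|f\|_\infty\|g\|_\infty|y|\cdot\xi(j,\e)$, where $\xi(j,\e):=\vol_{n-1}((M_j(f)-\e y)\cap\partial K_g)$ is monotonically decreasing in $j$ for each $\e$ and converges pointwise to $\vol_{n-1}((\partial K_f-\e y)\cap\partial K_g)$, which vanishes by hypothesis for all $\e$ in some interval $(0,\e_1]$ avoiding the (finitely many) exceptional values; Dini's theorem on this compact interval then forces $\xi(j,\e)\to 0$ uniformly in $\e$. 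The interior integration-by-parts term is bounded by $2\|g\|_\infty d\sqrt{j}\,|y|\vol_n(M_j(g))\le 2\|g\|_\infty d|y|(n/\sqrt{j})\vol_n(K_g)$, and the remainder is bounded via Cauchy--Schwarz by $2\|f\|_\infty |y|\bigl(\int_{K_g}|\nabla g|^2\bigr)^{1/2}\vol_n(M_j(g))^{1/2}\le C\sqrt{n/j}$, finiteness of the integral being the second-moment hypothesis on $\nabla g$; both decay like $1/\sqrt{j}$ with $\e$-independent constants.

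The main obstacle, just as in Proposition~\ref{p:prop1}, is arranging the boundary remainder to vanish \emph{uniformly} in $\e$: it is the only estimate that couples the geometric measure hypothesis on $\partial K_g\cap\partial(K_f-\e y)$ with the approximation $\psi_j\uparrow f$, and it is what forces one to work on a compact $\e$-interval where Dini's theorem applies and to choose $\e_1$ so as to avoid the finitely many bad parameter values. The other two estimates are already uniform in $\e\in(0,\e_1]$ since $\e$ enters only through the bounded quantity $|y|$, so combining the three bounds concludes the proof.
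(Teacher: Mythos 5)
Your proposal is correct and follows precisely the route the paper takes: it redoes the proof of Proposition~\ref{p:prop1} essentially verbatim for the translation family, which is exactly what the paper indicates when it says the proof is ``handled in the same way as that of Proposition~\ref{p:prop1}, with only minor modifications.'' You correctly identify the one genuine simplification (since $y$ is constant, $\divv(\psi_j(x+\e y)\,y)=\langle\nabla\psi_j(x+\e y),y\rangle$ with no trace correction) and faithfully reproduce the three-term decomposition, the $O(1/\sqrt{j})$ estimates for the interior and remainder pieces, and the Dini-theorem argument on a compact $\e$-interval avoiding the finitely many exceptional parameters.
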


The  proof of Proposition \ref{c:conv} is handled in  the same way as that of Proposition \ref{p:prop1}, with only minor modifications required. Similarly, the rest of the proof of Theorem \ref{t:shifts} is completely analogous to that of Theorem \ref{t:vari}. For the sake of brevity, we omit the details. 
\end{proof}

 \section{An application to a John-type position for even log-concave measures}

The main result of this section is a John-type theorem for $\log$-concave measures on $\R^n$. The methods involved closely follow the arguments in \cite{AK2018}. 

Consider the following variant of the optimization problem \eqref{e:measureopt}. Let $\mu$ be an even $\log$-concave probability measure on $\R^n$ whose density is supported on the whole of $\R^n$, and let $K\subset\R^n$ be an origin-symmetric convex body. For each $r>0$, consider the following quantity:
\begin{equation}\label{e:2measureopt}
m_{\mu}(r) :=\sup\left\{\mu(K \cap TB_2^n): T\in {\rm GL}_n(\R),\,\det(T) = r \right\}.
\end{equation}

Arguing as in the proofs of Theorems \ref{t:exitence} and \ref{t:isotropic}, we derive the following version of Theorem~\ref{t:measure}.

\begin{thm}\label{t:mipmeasures} Let $\mu$ be an even $\log$-concave probability measure on $\R^n$, $K \subset \R^n$ be an origin-symmetric convex body, and $r>0$. 

\begin{enumerate}
    \item[(a)] There exists  $T_0\in{\rm GL}_n(\R)$ such that the supremum \eqref{e:measureopt} is attained, i.e., $m_{\mu}(r) = \mu(K \cap T_0 B_2^n)$. 
\item[(b)] Assume, additionally, that the following hold: 
\begin{itemize}
    \item $\vol_{n-1}(\partial K\cap [r\mathbb{S}^{n-1}]) = 0$;
    \item $\vol_{n-1}(\partial K \cap \partial [TB_2^n]) = 0$ for all but finitely many $(T,z) \in  {\rm GL}_n(\R) \times \R^n$.
\end{itemize} 
If the identity $I_n$ is a solution to \eqref{e:2measureopt} for $K$ and $B_2^n$, then for every $\theta \in \mathbb{S}^{n-1}$,
\begin{equation*}
 \begin{split}
 \int_{K \cap [r\mathbb{S}^{n-1}]}\langle y,\theta\rangle^2 \, d\mu_{[r \mathbb{S}^{n-1}]}(y)
 =\frac{\mu_{r\mathbb{S}^{n-1}}(K \cap [r\mathbb{S}^{n-1}]) }{n}.
\end{split}  
\end{equation*}
In particular, the following decomposition of the identity holds:  
\[
\frac{I_n}{n}=\frac{\int_{K \cap [r\mathbb{S}^{n-1}]} y\otimes y \, d\mu_{r \mathbb{S}^{n-1}}(y) }{\mu_{r\mathbb{S}^{n-1}}(K \cap [r\mathbb{S}^{n-1}])}.
\]
\end{enumerate}
    
\end{thm}


\begin{thm}\label{t:johnnew} Let $\mu$ be an even $\log$-concave probability measure on $\R^n$ whose density is supported on the whole of $\R^n$, and let $K \subset \R^n$ be an origin-symmetric convex body in $\mu$-John position. For each $r >1$, let 
\[
T_r L \in \argmax \{\mu(K \cap TB_2^n) \colon T\in{\rm GL}_n(\R),\,\det(T) = r\} \]
be the image of $B_2^n$ of measure $\mu_r = r \mu(T_rB_2^n)$. Consider the probability measures
\[
\nu_r(A) = \frac{\mu(A \cap [\partial K \setminus T_rB_2^n])}{\mu(\partial K \setminus T_rB_2^n)}. 
\]
If $\vol_{n-1}(\partial K \cap \partial T_r B_2^n)=0$ for all $r$ sufficiently close to $1$, then there exists a sequence $r_j \searrow 1$ such that the sequence $\nu_{r_j}$ converges weakly to a measure $\nu$ supported in $\partial K \cap S^{n-1}$ satisfying the condition
\[
\int_{\partial K \cap S^{n-1}} x \otimes x \,d\nu(x) = \frac{I_n}{n}. 
\]
In other words, the measure $\nu$ on $\partial K \cap \mathbb{S}^{n-1}$ is isotropic. 
\end{thm}

We will need the following technical lemma. 

\begin{lemma} \label{l:tech} Let $\mu$ be an even $\log$-concave probability measure on $\R^n$ with density $\varphi \colon \R^n \to (0,\infty)$, and let $K \subset \R^n$ be an origin-symmetric convex body in $\mu$-John position.  Then the following hold:
\begin{itemize}
    \item[(i)] For $0 < r \leq 1$, we have $m_\mu(r) =\mu(L)$. 
    \item [(ii)] The function $m_\mu(r)$ is increasing and continuous on $[1,\infty)$.
    \item[(iii)] For each $r >1$, let $T_r \in {\rm GL}_n(\R)$ be a solution to \eqref{e:2measureopt}.  Then there exists a subsequence $r_j \searrow 1$ of $\{r > 1\}$ such that the functions $(1_K \varphi) \cdot 1_{T{R_j} B_2^n}$ converge to $(1_K \varphi) \cdot 1_{B_2^n}$ uniformly as $j \to \infty$. 
\end{itemize}
\end{lemma}

\begin{proof} Item (i) is obvious. For brevity, set $f = 1_K \varphi$, where $\varphi$ is the density of $\mu$, and $g = 1_{B_2^n}$. Fix $1 \leq r < s < \infty$. Set $m_{f,g}(r) := m_{\mu}(r)$. 
 On the one hand, observe that there exists a pair $(T_r,z_r)\in {\rm GL}(\R^n)\times\R^n$ such that
\begin{align*}
m_{f,g}(r) &= \int_{\R^n}f(x) g(T_r^{-1}(x-z_r)) \,dx
\leq \int_{\R^n}f(x) g\left(\left[\left(\frac{s}{r}\right)^{\frac{1}{n}}T_r\right]^{-1}\left(x\right)\right)dx
\leq m_{f,g}(s).
\end{align*}
On the other hand, we also have
\begin{align*}
    m_{f,g}(r) &= \int_{\R^n}f(x)g(T_r^{-1}(x))\,dx\\
    &\geq \int_{\R^n} f(x) g\left(\left[\left(\frac{r}{s}\right)^{\frac{1}{n}} T_s\right]^{-1}x\right)dx\\
    &\geq \int_{\R^n} f(x) g\left(T_s^{-1}\left[\left(\frac{s}{r}\right)^{\frac{1}{n}} x\right]\right)dx\\
    &\geq \int_{\R^n}f\left(\left(\frac{s}{r}\right)^{\frac{1}{n}}x\right) g\left( T_s^{-1}\left(\left(\frac{s}{r}\right)^{\frac{1}{n}} x\right)\right)dx\\
    &= \frac{r}{s}  \int_{\R^n} f(y) g(T_s^{-1}(x))\,dx\\
    &=\frac{r}{s}\cdot m_{f,g}(s).
\end{align*}
Therefore, we conclude that
\[
m_{f,g}(r) \leq m_{f,g}(s) \leq \frac{s}{r}\cdot m_{f,g}(r) \quad \text{ for all } 1 \leq r < s < \infty.
\]
This implies, in particular, that $m_{f,g}(r)$ is continuous on $[1,\infty)$. This proves (ii).

For the proof of (iii), according to (ii) we have 
\[
\int_{K_f}f(x) g_r(x)\,dx \to \int_{K_f}f(x) g(x) \,dx  \quad\text{ as } r \searrow 1. 
\]
Let $\{A_r\colon r>1\} \subset \text{GL}_n(\R)$ be such that $g\circ (A_rT_r)^{-1} = g$ holds for every $r >1$.  Therefore,
\[
\int_{\R^n} f(x) g(A_rx)\,dx \to \int_{\R^n}f(x)g(x)\,dx  \quad \text{ as } r\searrow 1. 
\]

\noindent Now as in the proof of Theorem \ref{t:exitence}, we have that $\{T_r\}_{r\geq 1}$ is contained in some compact set. Thus, $\{g_r\}_{r\geq 1}$ is uniformly bounded from above by some  log-concave function with finite maximum. By a functional version of Blaschke's selection theorem (see \cite[Theorem 2.15]{Mussnig-Li} and \cite[Theorems 4.18 and 7.6]{Rockafellar-Wets}), there exists a subsequence $g_{r_j}$ which hypo-converges to $g_0$ as $j\to\infty$. According to \cite[Theorem~7.7]{Rockafellar-Wets}, this condition is equivalent to local uniform convergence of $g_{r_j}$ to $g_0$. In particular, as $K_f$ is compact, this convergence is, in fact, uniform.
\end{proof}

\begin{proof}[Proof of Theorem~\ref{t:johnnew}] For brevity, set $f = 1_K \varphi$ and $g=1_{B_2^n}$, and let $r\searrow 1$.  For each such $r$, choose $T_r \in {\rm GL}_n(\R)$ with $\det(T_r) =r$ such that $m_{\mu}(r) = \int_{K} g(T_r^{-1}x)\,d\mu(x)$, which can be done because of Theorem~\ref{t:mipmeasures}(a). Set $g_r = g \circ T_r^{-1}$. According to Lemma~\ref{l:tech}, there exists a subsequence $\{r_j \colon j \in \mathbb{N}\}$ such that $fg_{r_j} \to fg$ uniformly. Set $d\mu(y) = f(y)g(y) \,d\sigma_{\mathbb{S}^{n-1}}(y)$. Choose a sequence $\{A_j \colon \det(A_j)=r_j, j \in \mathbb{N}\} \subset {\rm GL}_n(\R)$ such that $g = g_{r_j} \circ A_{j}^{-1}$.  Then we also have that $m_\mu(r_j) = r_j\int_{\R^n}f(A_jx) g(x)\,dx$ for each $j \in \N$. Therefore, for each $j \in \N$, the functions $f \circ A_j$ and $g$ maximize \eqref{e:2measureopt} with $r=r_j$.

By assumption, we have that $\vol_{n-1}(\partial K \cap T_r B_2^n) = 0$ for all $r$ sufficiently close to $1$; hence, there exists $j_0 \in \N$ such that for all $j \geq j_0$, we have  $\vol_{n-1}(\partial [A_j K] \cap \mathbb{S}^{n-1}) =0$. Hence, for all $j \geq j_0$, the hypotheses of Theorem~\ref{t:mipmeasures}(b) are satisfied for the pair $f \circ A_j$ and $g$. Consider the measures 
\[
 d\mu_j(y)=f(A_jy)g(y)\,d\sigma_{\mathbb{S}^{n-1}}(y), \quad j \geq j_0.
\]
Then for every $\theta \in \mathbb{S}^{n-1}$ and $j \geq j_0$, we have 
\begin{align*}
\int_{\mathbb{S}^{n-1}} \langle y, \theta \rangle ^2\,d\mu_j(y)=\frac{\mu_j(\mathbb{S}^{n-1})}{n}.
\end{align*}
Consequently, the measures given by 
\[
\nu_j(A) = \frac{\mu(A \cap [\partial K \setminus T_{r_j}B_2^n])}{\mu(\partial K \setminus T_{r_j}B_2^n)}
\]
must be isotropic whenever $j \geq j_0$.  Because $\mathbb{S}^{n-1}$ is a compact metric space, weak convergence implies that the measures $\nu_j$ ($j \geq j_0$) converge to some probability measure $\nu$ on $\mathbb{S}^{n-1}$. We will show that this measure $\nu$ is necessarily supported in $\partial K \cap \mathbb{S}^{n-1}$. 

Appealing to weak convergence and isotropicity, we see that, for each $\theta \in \mathbb{S}^{n-1}$,
\[
\int_{\mathbb{S}^{n-1}}\langle y,\theta\rangle^2\, d\nu_{j}(y) \to \int_{\mathbb{S}^{n-1}} \langle y,\theta\rangle^2\, d\nu(y),
\]
while simultaneously, 
\[
\frac{1}{n} = \frac{\nu_j(\mathbb{S}^{n-1})}{n} \to \frac{\nu(\mathbb{S}^{n-1})}{n}. 
\]
Consequently, the measure $\nu$ is isotropic. 

Finally, we show that $\text{supp}(\nu) \subset \partial K \cap \mathbb{S}^{n-1}$. Let $d(\cdot,\cdot)$ be the metric on $K$, and consider the set
\[
V_m= \left\{y \in \partial K \colon d(y, \mathbb{S}^{n-1}) \geq \frac{1}{m}\right\}.
\]
For each $j \geq j_0$, the measure $\nu_j$ is supported on $\partial K \setminus T_{r_j} B_2^n$, where $T_{r_j} B_2^n \to B_2^n$ as $j \to \infty$. Therefore, there exists some constant $N > 0$ such that, for any $m \geq N$, there is a $j(m)>0$ for which  $\nu_j(V_m) = 0$ whenever $j \geq j(m)$. As the set $V_m$ is open, weak convergences implies the inequality
\[
\nu(V_m) \leq \liminf_{j \to \infty} \nu_j(V_k) = 0,
\]
which implies $\nu(V_m) = 0$ for every $m \geq N$. Set 
\[
V = \bigcup_{m = N}^{\infty} V_m = \{x \in \partial K \colon d(x,\mathbb{S}^{n-1}) >0\} = \partial K \setminus \mathbb{S}^{n-1}.
\]
Then our previous investigation results in $\nu(V)=0$, whereby $\text{supp}(\nu) \subset \partial K \cap \mathbb{S}^{n-1}.$
\end{proof}

\section*{Acknowledgments}

The authors would like to thank Shiri Artstein-Avidan, J\'ulian Haddad, Carlos Hugo Jim\'enez, Eli Putterman, Rafael Villa and Artem Zvavitch for the inspiring discussions. 


\bibliographystyle{plain}
\bibliography{main}
	
	\vskip 2mm \noindent
 Steven Hoehner,   \ {\small \tt hoehnersd@longwood.edu} \\
	{\em 	Department of Mathematics \& Computer Science, Longwood University, U.S.A.}
  \vskip 1mm \noindent
	Michael Roysdon,   \ {\small \tt mar327@case.edu} \\
	{\em 	Department of Mathematics, Applied Mathematics and Statistics, Case Western Reserve University, U.S.A.}

\end{document}